\newtheorem{theorem}{Theorem}[section]
\newtheorem{lemma}[theorem]{Lemma}
\theoremstyle{definition}
\theoremstyle{remark}
\newcommand{\BB}[1]{\mathbb{#1}}
\newcommand{\CC}[1]{\mathrm{#1}}
\newcommand{\ee}{\mathrm{e}}
\newcommand{\ii}{\mathrm{i}}
\renewcommand{\O}[1]{\mathcal{O}\left( #1 \right)}
\newcommand{\R}[1]{\eqref{#1}}
\newcommand{\D}{\mathrm{d}}
\renewcommand{\ee}{\mathrm{e}}
\renewcommand{\ii}{\mathrm{i}}
\newcommand{\MM}{\mu}
\newcommand{\EE}{{\CC{E}}}
\newcommand{\OO}{{\CC{O}}}
\newcommand{\PP}{\CC{P}}
\newcommand{\GG}[1]{\mathfrak{#1}}
\newcommand{\arctanh}{\mathrm{arctanh}\,}
\begin{document}                        %% Standard LaTeX command

%%      -----------------------------------------------------------------------
%%      -------------------------------- TITLE -----------------------------
%%      -----------------------------------------------------------------------

\title{Fast Computation of Orthogonal Systems with a Skew-symmetric Differentiation Matrix}

%%      -----------------------------------------------------------------------------
%%      ------------------------------- AUTHORS -----------------------------
%%      ----------------------------------------------------------------------------
\author{Arieh Iserles}{University of Cambridge}
\author{Marcus Webb}{University of Manchester}

% EXAMPLE: \author{Bart Simpson}{Universit‰ Paris-Sorbonne (Paris IV)}
% Uncomment and fill in the following lines as needed
%\author{*** SECOND AUTHOR'S NAME ***}{*** SECOND AUTHOR'S AFFILIATION WHEN ARTICLE WAS WRITTEN ***}
%\author{*** THIRD AUTHOR'S NAME ***}{*** THIRD AUTHOR'S AFFILIATION WHEN ARTICLE WAS WRITTEN ***}
%\author{*** FOURTH AUTHOR'S NAME ***}{*** FOURTH AUTHOR'S AFFILIATION WHEN ARTICLE WAS WRITTEN ***}
%\author{*** FIFTH AUTHOR'S NAME ***}{*** FIFTH AUTHOR'S AFFILIATION WHEN ARTICLE WAS WRITTEN ***}
% Add additional names and affiliations as necessary using above format
%%      ---------------------------------------------------------------------
%%      --------------------------- DEDICATION  (OPTIONAL)------------------- 
%%      ---------------------------------------------------------------------

%       Uncomment the following line to insert a dedication.

%\dedication{ *** DEDICATION *** }        %% Enter dedication between braces.

%%      ------------------------------------------------------------------------------------
%%      --------------------------- ABSTRACT (OPTIONAL)----------------------
%%      ------------------------------------------------------------------------------------

%% ***** UNCOMMENT THE FOLLOWING TO INSERT AN ABSTRACT *****

\begin{abstract}
  Orthogonal systems in $\CC{L}_2(\BB{R})$, once implemented in spectral methods, enjoy a number of important advantages if their differentiation matrix is skew-symmetric and highly structured. Such systems, where the differentiation matrix is skew-symmetric, tridiagonal and irreducible, have been recently fully characterised. In this paper we go a step further, imposing the extra requirement of fast computation: specifically, that the first $N$ coefficients {of the expansion} can be computed to high accuracy in $\O{N\log_2N}$ operations.  We consider two settings, one approximating a function $f$ directly in $(-\infty,\infty)$ and the other approximating $[f(x)+f(-x)]/2$ and $[f(x)-f(-x)]/2$ separately in $[0,\infty)$. In each setting we prove that there is a single family, parametrised by $\alpha,\beta > -1$, of orthogonal systems with a skew-symmetric, tridiagonal, irreducible differentiation matrix and whose coefficients can be computed as Jacobi polynomial coefficients of a modified function. The four special cases where $\alpha, \beta= \pm 1/2$ are of particular interest, since coefficients can be computed using fast sine and cosine transforms. Banded, Toeplitz-plus-Hankel multiplication operators are also possible for representing variable coefficients in a spectral method. In Fourier space these orthogonal systems are related to an apparently new generalisation of the Carlitz polynomials.
\end{abstract}

% With AMS-LaTeX, \maketitle follows the abstract
\maketitle   

%%      ---------------------------------------------------------------------
%%      ------------------- TABLE OF CONTENTS (OPTIONAL) --------------------
%%      ---------------------------------------------------------------------

%% ***** IF YOUR PAPER IS OVER 40 PAGES AND YOU WISH TO HAVE A TABLE
%% ***** OF CONTENTS, PLEASE UNCOMMENT THE FOLLOWING LINE

% \tableofcontents

%%      ---------------------------------------------------------------------
%%      ---------------------------- BODY OF PAPER --------------------------
%%      ---------------------------------------------------------------------

%%      Please input or insert the body of your paper here.

\section{Introduction}

This paper continues a project which we have commenced in \cite{iserles18osssd}, to investigate complete systems in $\CC{L}_2(\BB{R})$ with a skew-symmetric differentiation matrix. Let us begin by explaining briefly the  underlying concepts, motivating our work and revisiting in a very abbreviated manner the contents of \cite{iserles18osssd}.

The concept of a {\em differentiation matrix\/} originates in the theory of finite differences --- essentially, this is the matrix taking a vector of function values to those of an approximation of the first derivative. An elementary example,
\begin{equation}
\label{eq:1.1}
\frac{1}{2\Delta x}\left[
\begin{array}{ccccc}
0 & 1 & 0 & \cdots & 0\\
-1 & 0 & 1 & \ddots & \vdots\\
0 & \ddots & \ddots & \ddots & 0\\
\vdots & \ddots & \ddots & \ddots & 1\\
0 & \cdots & 0 & -1 & 0
\end{array}
\right]\!,
\end{equation}
is obtained from a second-order central difference approximation on a uniform grid with spacing $\Delta x>0$ --- note that it is skew-symmetric. Skew-symmetry mimics the self-adjointness of the first derivative operator in the standard $\CC{L}_2$ Hilbert space with either zero Dirichlet or periodic boundary conditions, and it confers a wide range of advantages on a numerical method. We refer to \cite{hairer16nsp,iserles14ssd,iserles16jps} for a wealth of specific examples: in essence, once a differentiation matrix is skew symmetric, it is often easy to prove stability for linear PDEs, as well as conservation of energy whenever it is mandated by the underlying equation. 

The matrix in \R{eq:1.1} is skew symmetric, yet it is a sobering thought that this second-order approximation of the derivative is as good as it gets: no skew-symmetric finite-difference differentiation matrix on a uniform grid may exceed order 2 \cite{iserles14ssd}. It is possible (but not easy) to obtain higher-order differentiation matrices of this kind carefully choosing specific non-uniform grids \cite{hairer16nsp,hairer17bss}, but this is far from easy for high orders and, at any rate, finite differences are not the approach of choice in this paper. 

An example that motivates much of our work is the {\em linear Schr\"odinger equation\/} in a semiclassical regime,
\begin{equation}
\label{eq:1.2}
\ii\varepsilon \frac{\partial u}{\partial t}=\varepsilon^2\frac{\partial^2 u}{\partial x^2}-V(x,t)u,\qquad a\leq x\leq b,\quad t\geq0,
\end{equation}
(for simplicity, we focus here on the univariate case), typically given with periodic boundary conditions. The parameter $\varepsilon>0$ is  small and $V$ is the potential energy \cite{jin11mcm}. Typically, \R{eq:1.2} is solved by a spectral method (or spectral collocation) in space, followed by a combination of splittings (e.g.\ Strang splitting or Zassenhaus splitting) and, in the case of time-dependent potential, the Magnus expansion or a version thereof in time \cite{bader16eml,blanes17hoc,iserles18mlm}. 

The definition of a differentiation matrix extends naturally into the realm of spectral methods. In any spectral method we expand the unknown in a (truncated) orthonormal basis $\{\varphi_m\}_{m\in\BB{Z}_+}$ of the underlying $\CC{L}_2$ Hilbert space, where the $\varphi_m$s are suitably regular. The (infinite-dimensional) linear map $\mathcal{D}$ taking $\{\varphi_m\}_{m\in\BB{Z}_+}$ into $\{\varphi_m'\}_{m\in\BB{Z}_+}$ is called a {\em differentiation matrix.\/} In other words, $\varphi_m'=\sum_{k=0}^\infty {\mathcal D}_{m,k}\varphi_k$, $m\in\BB{Z}_+$. The virtues of skew-symmetry remain undimmed in this setting and they immediately imply stability and energy conservation for \R{eq:1.2}. 

Because of periodic boundary conditions, it is natural to use a Fourier basis for \R{eq:1.2}, leading to a skew-Hermitian (and diagonal) differentiation matrix.\footnote{We typically index a Fourier basis by integers, rather than non-negative integers, but this causes no difficulty in our framework.} To all intents and purposes, a skew-Hermitian matrix shares the benefits of a skew-symmetric one. Moreover, a Fourier basis has another critical virtue: we can approximate the first $N$ expansion coefficients using FFT in just $\O{N\log_2N}$ operations. Therefore, there is little incentive to seek alternative bases. 

However, the traditional setting -- finite interval, periodic boundary conditions -- of \R{eq:1.2} and other dispersive equations of quantum mechanics is being increasingly challenged, since it is inappropriate for long-term integration (or even short-term integration in the presence of quantum scattering) because sooner or later, wave packets reach a boundary and periodicity leads to non-physical solutions. Modern applications of quantum mechanics, not least quantum control \cite{shapiro03qc}, are an important example of this behaviour. Arguably, the most natural setting for \R{eq:1.2} is that of a Cauchy problem on the entire real line, with solutions confined to $\CC{L}_2(\BB{R})$. Zero Dirichlet boundary conditions on an interval correspond to a \emph{closed\/} quantum system which can be interpreted as a quantum system on the real line with $u(x,0) = 0$ for $x \notin (a,b)$ or where the potential $V$ is sufficiently large at the endpoints that no tunnelling is possible \cite{patriarca1994boundary}.

In \cite{iserles18osssd} we have developed a comprehensive theory of orthonormal bases of $\CC{L}_2(\BB{R})$ with a skew-symmetric, tridiagonal, irreducible  differentiation matrix
\begin{equation}
\label{eq:1.3}
\mathcal{D}=
\left[
\begin{array}{cccccc}
0 & b_0 & 0\\
-b_0 & 0 & b_1 & 0\\
0 & -b_1 & 0 & b_2 & 0\\
& \ddots & \ddots & \ddots & \ddots
\end{array}
\right]\!,
\end{equation}
where without loss of generality $b_m>0$, $m\in\BB{Z}$. There exists a one-to-one relationship between absolutely continuous real Borel measures $\D\mu$ with all their moments bounded, symmetric with respect to the origin and supported by the entire real line on the one hand, orthonormal bases $\Phi=\{\varphi_m\}_{m\in\BB{Z}_+}$, complete in $\CC{L}_2(\BB{R})$ and with a differentiation matrix of the form \R{eq:1.3} on the other. The emphasis on a {\em tridiagonal\/} differentiation matrix is motivated by our wish to minimise the cost of eventual computations. It is standard to solve equations like \R{eq:1.2} by splittings \cite{bader14eas,blanes17hoc}. Thus, eventually we need to compute terms of the form $\ee^{h\mathcal{D}}\MM{u}$ or $\ee^{\ii h\mathcal{D}^2}\MM{u}$, and this becomes considerably cheaper once $\mathcal{D}$ is tridiagonal -- a forthcoming paper of Celledoni and Iserles, for example, will describe an $\O{N\log_2N}$ algorithm to this end.

Specifically, let $\D\mu(\xi)=w(\xi)\D \xi$ be such Borel measure and $\{p_m\}_{m\in\BB{Z}_+}$ the underlying monic orthogonal polynomials. Because of symmetry, they obey a three-term recurrence relation of the form 
\begin{equation}
\label{eq:1.4}
p_{m+1}(x)=xp_m(x)-\lambda_m p_{m-1}(x),\qquad m\in\BB{Z}_+,
\end{equation}
where $\lambda_0=0$ and $\lambda_m>0$, $m\in\BB{N}$. Set $b_m=\sqrt{\lambda_{m+1}}$, $m\in\BB{Z}_+$, and
\begin{equation}
\label{eq:1.5}
\varphi_m(x)=\frac{(-\ii)^n}{\sqrt{2\pi}} \frac{1}{\|p_m\|} \int_{-\infty}^\infty g(\xi) p_m(\xi) \ee^{-\ii x\xi}\D\xi,\qquad m\in\BB{Z}_+,
\end{equation}
where $\|p_m\|^2=\int_{-\infty}^\infty p_m^2(x)\D\mu(x)$ and $g$ is a complex-valued function with even real part and odd imaginary part such that $|g(\xi)|^2 = w(\xi)$. It has been proved in \cite{iserles18osssd} that $\{\varphi_m\}_{m\in\BB{Z}_+}$ is indeed orthonormal and complete in $\CC{L}_2(\BB{R})$ (essentially due to the Plancharel theorem) and has the differentiation matrix \R{eq:1.3}. On the other hand, given such a basis, using the Favard Theorem we can always associate it with a symmetric Borel measure supported by the real line: such measure is unique if the corresponding Hamburger moment problem is determinate \cite[p.~73]{chihara78iop}.

Realistic implementation of the aforementioned bases, however, requires that two computations can be performed explicitly, namely explicit formation of the {\em Orthogonal Polynomial System (OPS)\/} $\{p_m\}_{m\in\BB{Z}_+}$ given $\D\mu$ and the integrals \R{eq:1.5}, since otherwise we will not have the basis $\Phi$ in an explicit form. Here we are rapidly running against the limitations of current theory of orthogonal polynomials. An obvious and familiar example of {\em Hermite functions\/}
\begin{displaymath}
\varphi_m(x)=\frac{(-1)^m}{(2^mm!)^{1/2}\pi^{1/4}} \ee^{-x^2/2}\CC{H}_m(x),\quad b_m=\left(\frac{m+1}{2}\right)^{\!1/2},\qquad m\in\BB{Z}_+,
\end{displaymath}
where the $\CC{H}_m$ are Hermite polynomials, aside, very few symmetric measures supported by the entire real line are known for which the underlying OPS (or even the recurrence coefficients $\lambda_m$) can be written explicitly:  \cite{iserles18osssd} list just the generalised Hermite polynomials and Carlitz polynomials. Indeed, the resolution of the {\em Freud problem,\/} precise asymptotic determination of the $\lambda_m$s for $w(\xi)=\ee^{-\xi^{n}+q(\xi)}$, where $n\geq2$ is an even integer and $q$ an even polynomial of degree $\leq n-2$, using the Riemann--Hilbert transform \cite{deift99sao,fokas92iam}, is considered  a recent triumph of the theory of orthogonal polynomials -- and in our setting we need explicit values, not just asymptotics! Thus, our basic challenge in this paper is to search for systems $\Phi$ by alternative means.

We seek systems that confer a tangible advantage when compared to the default of Hermite functions, and in this paper we find systems that allow for faster computation of expansion coefficients via Fast Cosine Transforms (FCT) or Fast Sine Transforms (FST). Given $f\in\CC{L}_2(\BB{R})\cap\CC{C}^\infty(\BB{R})$, we wish to compute 
\begin{displaymath}
\hat{f}_m=\int_{-\infty}^\infty f(x)\varphi_m(x)\D x,\qquad m=0,\ldots,N-1,
\end{displaymath}
to sufficiently high accuracy in the least possible number of operations. We can identify three options. 

Firstly, it is possible to use the {\em fast multipole method\/} to compute expansion coefficients in $\O{N\log N+N\log\varepsilon^{-1}}$ operations to accuracy $\varepsilon$, provided that a table of nodes and weights of a suitable Gauss--Christoffel quadrature is available \cite{dutt96fap}. In the Hermite case, Gauss--Hermite quadrature is required and we note that its nodes and weights can be computed in $\O{N}$ operations \cite{townsend16fag}. For other bases of the form \eqref{eq:1.5}  quadrature rules may take up to $\O{N^2}$ operations to compute.

Secondly, by the Fourier-transform on $\CC{L}_2(\BB{R})$, the evaluation of the $\hat{f}_m$s reduces to an expansion in the polynomial basis $\{p_m\}_{m\in\BB{Z}_+}$. This, however, confers an advantage only if we can compute the polynomial coefficients in a fast manner. Of all fast and stable methodologies known to the present authors, this restricts us to $\D\mu$ with bounded support, which has been ruled out earlier. The reason why we require $\CC{supp}\,\mu=\BB{R}$ is the completeness of the expansion: if, without loss of generality, $\CC{supp}\,\mu=(-1,1)$ then $\Phi$ is complete not in $\CC{L}_2(\BB{R})$ but in the {\em Paley--Wiener space\/} $\mathcal{PW}_{[-1,1]}(\BB{R})$ \cite{iserles18osssd}. This is of no clear interest in the design of spectral methods for PDEs but might be relevant, for example, to signal processing. 

In this paper we follow a third option. We seek a basis of $\varphi_m$s so that the expansion coefficients $\hat{f}_m$ are equal to standard orthogonal polynomial coefficients of a modified function. If the standard polynomials are, for example, the Chebyshev polynomials (which \emph{a priori} is not necessarily even possible) then the first $N$ coefficients can be approximated in $\O{N\log_2N}$ operations using the FCT. In Section \ref{sec:fullrange}, this line of reasoning leads us to consider the model,
\begin{equation}
\label{eq:1.6}
\varphi_m(x)=\Theta(x) q_m(H(x)),\qquad m\in\BB{Z}_+,
\end{equation}
where $\Theta \in \CC{L}_2(\BB{R})$, $\{q_m\}_{m\in\BB{Z}_+}$ is an orthonormal OPS with respect to a measure $W(t)\D t$ supported in $(-1,1)$, and $H$ is a differentiable, strictly monotonically increasing function mapping $(-\infty,\infty)$ onto $(-1,1)$. 

We prove in Section 2 that enforcing orthonormality on such a model means that the coefficients are equal to the coefficients of $f(h(t))/\Theta(h(t))$ in the $\{q_m\}_{m\in\BB{Z}_+}$ basis, where $h$ is the inverse function of $H$. When we further require that our system has the differentiation matrix in \eqref{eq:1.3}, all options, up to an affine change of variables, are whittled down to
\begin{equation}
\label{eq:1.7}
\varphi^{(\alpha,\beta)}_m(x)=\frac{(-1)^m}{\sqrt{g_m^{(\alpha,\beta)}}} (1-X)^{\frac{\alpha+1}{2}}(1+X)^{\frac{\beta+1}{2}} \PP_m^{(\alpha,\beta)}(X), \qquad X = \tanh x,
\end{equation}
where $\alpha,\beta>-1$, $\PP_m^{(\alpha,\beta)}$ is the $m$th {\em Jacobi polynomial\/} and
\begin{equation}
\label{eq:1.8}
g_m^{(\alpha,\beta)}=\int_{-1}^1 (1-t)^\alpha(1+t)^\beta  [\PP_m^{(\alpha,\beta)}(t)]^2\D t=\frac{2^{1+\alpha+\beta}\CC{\Gamma}(1+\alpha+m)\CC{\Gamma}(1+\beta+m)}{m!(1+\alpha+\beta+2m)\CC{\Gamma}(1+\alpha+\beta+m)}
\end{equation}
\cite[p.~260]{rainville60sf}. 

The first $N$ expansion coefficients of a function with respect to $\Phi$ as in equation \eqref{eq:1.7} can always be approximated in $\O{N(\log N)^2}$ operations by fast polynomial transform techniques \cite{townsend18fpt}. The cases $\alpha,\beta = \pm 1$ correspond to the Chebyshev polynomials of four different kinds (see \cite[18.3]{DLMF}), all of whose coefficients can be approximated in $\O{N\log_2 N}$ operations via various flavours of FCT and FST. Furthermore, a given expansion $f(x) = \sum_{k=0}^{N-1} c_k \varphi^{(\alpha,\beta)}_k(x)$ can be evaluated at a single point $x \in \BB{R}$ using Clenshaw's algorithm \cite{clenshaw1955note}. 

In Section~3 we turn our gaze to a more complicated model,
\begin{eqnarray}
  \label{eq:1.10}
  \varphi_{2m}(x)&=&\Theta_\EE(x) r_m(H(x)),\\
  \nonumber
  \varphi_{2m+1}(x)&=&\Theta_\OO(x) s_m(H(x)), \qquad m\in\BB{Z}_+.
\end{eqnarray}
Here $\Theta_\EE$ and $\Theta_\OO$ are given functions, the first even and the second odd, $\{r_m\}_{m\in\BB{Z}_+}$ and $\{s_m\}_{m\in\BB{Z}_+}$ are  orthonormal OPSs with respect to the measures $w_\EE\!\D t $ and $w_\OO\!\D t$, both supported by $(-1,1)$, and $H$ maps strictly monotonically $(0,\infty)$ to $(-1,1)$. Neither $w_\EE$ nor $w_\OO$ need {to} be symmetric with respect to the origin. Advancing along similar lines to our analysis of \R{eq:1.6}, computation of coefficients corresponding to the system \R{eq:1.7} can be reduced to the expansion in the two OPS, $\{r_m\}_{m\in\BB{Z}_+}$ and $\{s_m\}_{m\in\BB{Z}_+}$. 

The exploration of \R{eq:1.10} is a lengthier process. Step after step we winnow the possibilities for $H$, $\Theta_\EE$, $\Theta_\OO$, $w_\EE$ and $w_\OO$ -- at the end we are left with just a one-parameter family of such systems. We prove that \R{eq:1.10} is consistent with orthonormality and with a skew-symmetric, tridiagonal, irreducible differentiation matrix if and only if $w_\EE(t)=(1-t)^\alpha(1+1)^{-\frac12}$ and $w_\OO(t)=(1-t)^\alpha(1+t)^{\frac12}$ for some $\alpha>-1$ -- in other words, the $r_m$s and $s_m$s are (normalised) {\em Jacobi polynomials\/} $\PP_m^{(\alpha,-\frac12)}$ and $\PP_m^{(\alpha,\frac12)}$ respectively. However, looking deeper, we demonstrate that these systems are {\em identical\/} to the full-range system \R{eq:1.7} with $\alpha=\beta$, implemented in a half-range mode.

Why do we need to discuss both the `full-length' systems of Section~2 and the `two half-length' systems of Section~3, in particular as the latter are nothing but an alternative implementation of the former? As things stand, they exhibit the same range of benefits: they are orthonormal and complete in $\CC{L}_2(\BB{R})$, have skew-symmetric, tridiagonal differentiation matrices, while their expansion coefficients can be approximated in a fast and stable manner. We touch upon the advantages of either approach in Section~5. However, these are early days in the investigation of orthogonal systems with skew-symmetric  differentiation matrices and their applications to spectral methods. There is great merit, we believe, in exploring their theory, potential and limitations in the broadest possible sense.

In Section~\ref{sec:tanh-Jacobifunctions} we give the functions $g$ and measures $\D\mu(\xi)$ which appear in the formula \eqref{eq:1.5} for these orthonormal bases, for all $\alpha,\beta > -1$. They turn out to be the weights for a generalised version of the Carlitz polynomials (now with two parameters) discussed in \cite{iserles18osssd}, which have the explicit form,
\begin{displaymath}
\D\mu_{\alpha,\beta}(\xi) = C_{\alpha,\beta}^2 \left|\Gamma\left(\frac{\alpha+1}{2} + \frac{\ii\xi}{2} \right)\Gamma\left(\frac{\beta+1}{2} - \frac{\ii\xi}{2} \right)\right|^2 \!\D\xi,\qquad \alpha,\beta>-1.
\end{displaymath}
The case $\beta = -\alpha$ corresponds to the one-parameter family of Carlitz polynomials. The original interest in Carlitz polynomials stems from the fact that their moments can be written in terms of Bernoulli polynomials. We are not aware of work on this two-parameter generalisation, nor can we anticipate its relevance to the theory of orthogonal polynomials on the real line.

In Section 5 we list some conclusions of our work, and flesh out the basis for applications of these functions to computing the Fourier transform and to a fast Olver--Townsend-type spectral method on the real line \cite{olver2013fast}.

This is the place to mention \cite{iserles19for}, a companion paper to the current one, where we explore complex-valued orthonormal systems with skew-Hermitian tridiagonal differentiation matrices. {\em Inter alia\/} we demonstrate there the existence of another system with coefficients that can be computed in $\O{N\log_2N}$ operations, this time using the {\em Fast Fourier Transform.\/} 

{An alternative to the approach of this paper is to map a differential equation from $\CC{L}_2(\BB{R})$, say, to a finite domain and solve the new equation there with a spectral method. This  is often accomplished with conformal maps, providing a convenient means to study the speed of convergence \cite{boyd87orf,guo10gjr,narayan13gwr,yi12gjr}.  This  very natural and useful approach is genuinely different from the theme of this paper: the procedures of approximation and `shrinkage' in general do not commute. As an example, consider a Cauchy problem for \R{eq:1.2}. A major feature of the linear Schr\"odinger equation is that the Euclidean norm of the solution remains constant as the time evolves, reflecting the quantum-mechanical interpretation of the solution: $|u(\,\cdot,t)|^2$ is a probability density. Mapping the equation into a finite interval, the $\CC{L}_2$ norm  corresponds to a fairly nonstandard Sobolev norm. While good enough for the proof of convergence by means of the Lax Equivalence Theorem, this makes matters quite difficult insofar as energy conservation is concerned.} 

\setcounter{equation}{0}
\setcounter{figure}{0}
\section{The full-range setting}\label{sec:fullrange}

In this section we characterise all orthonormal systems $\Phi = \{\varphi_m\}_{m\in\BB{Z}_+}$ with a skew-symmetric, irreducible, tridiagonal differentiation matrix, with the additional constraint that the expansion coefficients of a function $f \in \CC{L}_2(\BB{R})$ are equal to expansion coefficients for a weighted and mapped version of $f$, in an orthonormal polynomial basis on $(-1,1)$. Explicitly, for the differentiation matrix, we require
\begin{equation}
\label{eq:2.1}
\varphi_m'=-b_{m-1}\varphi_{m-1}+b_m\varphi_{m+1},\qquad m\in\BB{Z}_+,
\end{equation}
where $b_{-1}=0$ and, without loss of generality, $b_m > 0$ for $m\in\BB{Z}_+$. For the expansion coefficients, we require
\begin{equation}
\label{eq:2.2}
\hat{f}_m = \int_{-1}^1 \theta(t)f(h(t)) q_m(t) W(t) \, \D t,
\end{equation}
where $\{q_m\}_{m\in\BB{Z}}$ are orthonormal polynomials with respect to the absolutely continuous measure $W(t)\D t$ on $(-1,1)$, and $h,\theta: (-1,1) \to \BB{R}$.

We will make the following assumptions about $\theta$, $h$ and $W$. Later we \emph{deduce} much more indeed about these functions as necessary consequences of our model.
\begin{itemize}
  \item $h$ maps onto the whole of $\BB{R}$, is differentiable with $h'$ being a measurable, positive function.
  \item This implies the existence of an inverse function $H : \BB{R} \to (-1,1)$ which is differentiable with $h'(t)H'(h(t)) = 1$, therefore $H'$ is also positive and measurable.
  \item $\theta$ is such that $t \mapsto \theta(t)\sqrt{W(t)/h'(t)} \in \CC{L}_\infty(-1,1)$. A careful application of the weighted Cauchy-Schwarz inequality with weight $h'$ shows that this implies that $\hat{f}_m$ is finite for all $m\in\BB{Z}_+$.
\end{itemize}

Changing variables to $x = h(t)$ yields
\begin{equation}
\label{eq:2.3}
\hat{f}_m = \int_{-\infty}^{\infty} f(x)\Theta(x)q_m(H(x)) \, \mathrm{d} x,
\end{equation}
where $\Theta(x) = \theta(H(x))H'(x) W(H(x))$. For this to hold for all $f \in \CC{L}_2(\BB{R})$,  $\varphi_m$ must be of the form,
\begin{equation}
\label{eq:2.4}
\varphi_m(x)=\Theta(x) q_m(H(x)),\qquad m\in\BB{Z}_+.
\end{equation}

The rest of the section is devoted to proving the following surprisingly simple result.

\begin{theorem}\label{thm:alphabeta}
  All orthonormal systems of the form in equation \R{eq:2.4} with a skew-symmetric, tridiagonal, irreducible differentiation matrix are, up to an affine change of variables, of the form
  \begin{equation}
  \label{eq:2.5}
  \varphi_m(x)=\frac{(-1)^m}{\sqrt{g_m^{(\alpha,\beta)}}} (1-\tanh x)^{(\alpha+1)/2}(1+\tanh x)^{(\beta+1)/2} \PP_m^{(\alpha,\beta)}(\tanh x),\qquad m\in\BB{Z}_+
  \end{equation}
  for $\alpha,\beta>-1$, with $g_m^{(\alpha,\beta)}$ as in \R{eq:1.8}. The coefficients $b_m$ are given by \R{eq:2.8}. 
\end{theorem}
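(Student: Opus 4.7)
The plan is to extract, step by step, necessary conditions on $H$, $\Theta$, $W$, and $\{q_m\}_{m\in\BB{Z}_+}$ from the three hypotheses: orthonormality, skew-symmetry, and tridiagonality of the differentiation matrix. First, substituting \R{eq:2.4} into the orthonormality condition $\int\varphi_m\varphi_k\,\D x=\delta_{m,k}$ and changing variables to $t=H(x)$ yields
\begin{displaymath}
  \int_{-1}^1 \frac{\Theta(h(t))^2}{H'(h(t))}\, q_m(t) q_k(t)\,\D t = \delta_{m,k},
\end{displaymath}
which, combined with the orthonormality of $\{q_m\}$ with respect to $W\,\D t$, forces $\Theta(x)^2 = W(H(x))H'(x)$. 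Next, differentiating \R{eq:2.4} and enforcing the three-term relation \R{eq:2.1} reduces the tridiagonal differentiation-matrix requirement to the single identity
\begin{displaymath}
  A(t)\, q_m'(t) + B(t)\, q_m(t) = -b_{m-1}\, q_{m-1}(t) + b_m\, q_{m+1}(t), \qquad t\in(-1,1),
\end{displaymath}
where $A(t)=1/h'(t)$ and $B(t)=\Theta'(h(t))/\Theta(h(t))$, and a short chain-rule computation using $\Theta^2 = W(H)H'$ gives the crucial relation $2BW = (AW)'$.

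The bulk of the argument now focuses on the first-order differential operator $L = A\,\D/\D t + B$ acting on $\CC{L}_2((-1,1),W\,\D t)$. The relation $2BW=(AW)'$ is exactly the condition that $L$ is formally skew-adjoint with respect to $W\,\D t$, which is how skew-symmetry of the differentiation matrix is encoded, and tridiagonality becomes the polynomial-degree requirement that $Lq_m$ be a linear combination of $q_{m\pm 1}$ for every $m$. A degree count then forces $A$ to be a polynomial of degree exactly two (anything lower makes $Lq_m$ unable to reach degree $m+1$ and violates irreducibility) and $B$ to be a polynomial of degree at most one. Because $h(t)=\int\D t/A(t)$ must biject $(-1,1)$ onto $\BB{R}$ monotonically, the quadratic $A$ must have simple zeros precisely at the endpoints of its domain; up to the affine rescaling of $x$ permitted by the statement, this forces $A(t)=1-t^2$ and hence $H(x)=\tanh x$. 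The ODE $(AW)'=2BW$ with $A=1-t^2$ and $B$ linear then has general solution $W(t)=C(1-t)^\alpha(1+t)^\beta$, with integrability on $(-1,1)$ forcing $\alpha,\beta>-1$. Consequently, $\{q_m\}$ is the orthonormalised Jacobi system $\PP_m^{(\alpha,\beta)}/\sqrt{g_m^{(\alpha,\beta)}}$.

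To conclude I would substitute $W$ and $H=\tanh$ into $\Theta^2=W(H)H'$ and use the identity $\CC{sech}^2 x=(1-\tanh x)(1+\tanh x)$ to recover the weight $(1-\tanh x)^{(\alpha+1)/2}(1+\tanh x)^{(\beta+1)/2}$ appearing in \R{eq:2.5}; the sign $(-1)^m$ is dictated by the standard normalisation needed to make the $b_m$ positive, and the explicit $b_m$ of \R{eq:2.8} drop out by matching $Lq_m$ against the classical three-term recurrence for $\PP_m^{(\alpha,\beta)}$. The most delicate step, I expect, will be the degree-exactly-two argument: one must rule out $\deg A<2$, show that the roots of $A$ cannot lie strictly inside or outside $[-1,1]$ without destroying the mapping properties of $h$, and confirm that the boundary contributions in the integration-by-parts underlying skew-adjointness of $L$ genuinely vanish across the whole range $\alpha,\beta>-1$.
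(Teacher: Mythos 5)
Your proposal is correct and follows essentially the same route as the paper: orthonormality forces $\Theta^2=W(H)H'$, the $m=0,1$ cases of the differentiation relation force $1/h'$ to be a quadratic whose zeros must sit at $\pm1$ for $h$ to map $(-1,1)$ onto $\BB{R}$ (hence $H=\tanh$ up to an affine change of variables), a Pearson-type ODE then yields the Jacobi weight $(1-t)^\alpha(1+t)^\beta$ with $\alpha,\beta>-1$ from integrability, and the $b_m$ and the sign $(-1)^m$ come from matching against the Jacobi derivative and recurrence identities. Your repackaging of the condition $2BW=(AW)'$ as formal skew-adjointness of $L=A\,\D/\D t+B$ on $\CC{L}_2(W\,\D t)$ is a pleasant structural shortcut for the sufficiency step (it gives the vanishing diagonal and $\eta_m=-b_{m-1}$ from the boundary term $[(1-t)^{\alpha+1}(1+t)^{\beta+1}q_mq_k]_{-1}^1=0$, sparing the paper's explicit algebra), and the one imprecision --- the degree count alone gives only $\deg A\le 2$, not $\deg A=2$ --- is repaired by your own subsequent mapping argument.
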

We will discuss properties of these orthonormal systems in Section~4. In particular, we will show that they are in fact complete orthonormal bases for $\CC{L}_2(\BB{R})$ by deriving the measure $\D\mu(\xi)$ and function $g(\xi)$ in the Fourier transform expression which these systems must satisfy as in equation \eqref{eq:1.5} and \cite{iserles18osssd}.

\subsection{Necessary conditions}

Let us work out {the} necessary consequences of $\Phi$ being an orthonormal system in $\CC{L}_2(\BB{R})$. For all $n,m\in\BB{Z}_+$,
\begin{eqnarray*}
  \int_{-\infty}^\infty \varphi_n(x)\varphi_m(x) \, \D x &=& \int_{-\infty}^\infty \Theta(x)^2 q_m(H(x))q_n(H(x)) \, \D x \\
  &=& \int_{-1}^1 q_m(t) q_n(t) \Theta(h(t))^2 h'(t) \, \D t.
\end{eqnarray*}
It follows at once that
\begin{equation}
\label{eq:2.6}
W(t) = \Theta(h(t))^2 h'(t) \qquad\text{and}\qquad \Theta(x) = \sqrt{H'(x) W(H(x))}.
\end{equation}
We will return to this later.

It is considerably more complicated to ensure the existence of a skew-symmetric, tridiagonal differentiation matrix. First note that by setting $m = 0$ in equation \eqref{eq:2.4}, we see that $\Theta$ is infinitely differentiable and in $\CC{L}_2(\BB{R})$, because it is proportional to $\varphi_0$ (which is an infinitely differentiable function in $\CC{L}_2(\BB{R})$, being the Fourier transform of a superalgebraically decaying $\CC{L}_2(\BB{R})$ function by \eqref{eq:1.5}).

Inserting equation \eqref{eq:2.4} into equation \eqref{eq:2.1}, we obtain for all $m \in \BB{Z}_+$,
\begin{displaymath}
\Theta' q_m(H) + \Theta q_m'(H)H' = -b_{m-1}\Theta q_{m-1}(H) + b_m \Theta q_{m+1}(H).
\end{displaymath}
Setting $m=0$ implies that $\Theta'(x) = b_0 q_1(H(x)) \Theta(x) / q_0(H(x))$, which can be substituted back into the equation for general $m \in \BB{Z}_+$ to find
\begin{displaymath}
\Theta b_0 q_1(H) q_m(H)/q_0(H) + \Theta q_m'(H)H' = -b_{m-1}\Theta q_{m-1}(H) + b_m \Theta q_{m+1}(H).
\end{displaymath}
Dividing through by $H'(x)\Theta(x)$ and changing variables to $t =H(x)$ gives us
\begin{equation}
\label{eq:2.7}
q_m'(t) = h'(t) \left[ -b_{m-1}q_{m-1}(t) + b_m q_{m+1}(t) - \frac{b_0}{q_0}q_1(t)q_m(t)\right]\!.
\end{equation}
Here we used the fact discussed earlier that $H'(h(t))h'(t) = 1$.

Now, the left-hand-side of equation \eqref{eq:2.7} is a polynomial of degree exactly $m-1$, while the right-hand-side is $h'(t)$ times a polynomial of degree at most $m+1$. It follows from the case $m=1$ that $h'(t)=1/(at^2+bt+c)$ for some real numbers $a,b$ and $c$. Since $h:(-1,1)\rightarrow\BB{R}$ and $h'>0$, necessarily $h'(\pm1)=\infty$ and it follows that, up to an affine change of variables, $h(t)=\arctanh t$ and $H(x)=\tanh x$. 

Back to equation \eqref{eq:2.6}. Substituting in the necessary form of $h$ we have,
\begin{displaymath}
W(t) = \frac{\Theta(\arctanh t)^2}{1-t^2}.
\end{displaymath} 
Using the fact that $\Theta'(x) = b_0 q_1(H(x)) \Theta(x) / q_0(H(x))$, we conclude after simple algebra that
\begin{displaymath}
W'(t) = \frac{2b_0q_1(t)/q_0 + 2t}{1-t^2} W(t).
\end{displaymath}
Writing $2b_0 q_1(t) /q_0 + 2t = (1+t)\alpha + (1-t)\beta$ for real numbers $\alpha$ and $\beta$,  the solution to this ODE with $W(0) = 1$ without loss of generality, is
\begin{displaymath}
W(t)=(1-t)^\alpha(1+t)^\beta.
\end{displaymath}
It is none other than a {\em Jacobi weight\/}. We deduce that, if at all possible, the polynomials in the system \R{eq:2.4} must be the (orthonormal) Jacobi polynomials, 
\begin{displaymath}
q_m(t)=\frac{(-1)^{k_m}}{\sqrt{g_m^{(\alpha,\beta)}}}\PP_m^{(\alpha,\beta)}(t)
\end{displaymath}
where $g_m^{(\alpha,\beta)}$ has been defined in \eqref{eq:1.8} and $k_m$ is an integer --- for the time being, to allow for simpler algebra, we assume that $k_m=0$ but will change this later.

Substituting $W(t)$ into equation \eqref{eq:2.6} shows that
\begin{displaymath}
\Theta(x) = \left( 1 - \tanh x\right)^{\frac{\alpha+1}{2}}\left( 1 + \tanh x\right)^{\frac{\beta+1}{2}}.
\end{displaymath}

\subsection{Sufficient conditions}

All that remains is to check whether these systems actually satisfy the requirements set out at the start of the section. Firstly, the functions are in $\CC{L}_2(\BB{R})$ if and only if $\alpha,\beta > -1$ (which also ensures that $W(t) \D t$ is a finite measure). A quick change of variables will show that these functions are indeed orthonormal for all $\alpha,\beta > -1$. For the final requirement on the differentiation matrix, we have,
\begin{eqnarray*}
  \varphi_m'(t)&=&-\frac{\alpha+1}{2} \frac{1}{\sqrt{g_m^{(\alpha,\beta)}}} (1-t^2) (1-t)^{\frac{\alpha-1}{2}}(1+t)^{\frac{\beta+1}{2}} \PP_m^{(\alpha,\beta)}(t)\\
  &&\mbox{}+\frac{\beta+1}{2}\frac{1}{\sqrt{g_m^{(\alpha,\beta)}}} (1-t)^{\frac{\alpha+1}{2}}(1+t)^{\frac{\beta-1}{2}} \PP_m^{(\alpha,\beta)}(t)\\
  &&\mbox{}+\frac{1}{\sqrt{g_m^{(\alpha,\beta)}}} (1-t^2) (1-t)^{\frac{\alpha+1}{2}}(1+t)^{\frac{\beta+1}{2}} \frac{\D\PP_m^{(\alpha,\beta)}(t)}{\D t}\\
  &=&\frac{(1\!-\!t)^{\frac{\alpha+1}{2}}(1+t)^{\frac{\beta+1}{2}}}{\sqrt{g_m^{(\alpha,\beta)}}} \left[ \!\left(\frac{\beta\!-\!\alpha}{2}-\frac{\alpha\!+\!\beta\!+\!2}{2}t\right) \PP_m^{(\alpha,\beta)}(t) +(1-t^2)\frac{\D\PP_m^{(\alpha,\beta)}}{\D t}\right]\!.
\end{eqnarray*}
According to \cite[18.9.17--18]{DLMF}
\begin{eqnarray*}
  (1-t^2)\frac{\D\PP_m^{(\alpha,\beta)}(t)}{\D t}(t)&=&m\!\left(\frac{\alpha-\beta}{\alpha+\beta+2m}-t\right)\!\PP_m^{(\alpha,\beta)}(t)+\frac{2(\alpha+m)(\beta+m)}{\alpha+\beta+2m}\PP_{m-1}^{(\alpha,\beta)}(t)\\
  &=&(\alpha+\beta+m+1) \left(\frac{\alpha-\beta}{\alpha+\beta+2m+2}+t\right)\PP_{m-1}^{(\alpha,\beta)}(t)\\
  &&\mbox{}=\frac{2(m+1)(\alpha+\beta+m+1)}{\alpha+\beta+2m+2} \PP_{m+1}^{(\alpha,\beta)}(t),
\end{eqnarray*}
while the three-term recurrence relation for Jacobi polynomials is 
\begin{displaymath}
t\PP_m^{(\alpha,\beta)}(t)=A_m\PP_{m-1}^{(\alpha,\beta)}+B_m\PP_m^{(\alpha,\beta)}(t)+C_m\PP_{m+1}^{(\alpha,\beta)}(t),
\end{displaymath}
where
\begin{eqnarray*}
  A_m&=&\frac{2(\alpha+m)(\beta+m)}{(\alpha+\beta+2m)(\alpha+\beta+2m+1)},\\
  B_m&=&-\frac{\alpha^2-\beta^2}{(\alpha+\beta+2m)(\alpha+\beta+2m+2)},\\
  C_m&=&\frac{2(m+1)(\alpha+\beta+m+1)}{(\alpha+\beta+2m+1)(\alpha+\beta+2m+2)}
\end{eqnarray*}
\cite[p.~263]{rainville60sf}. In other words,
\begin{displaymath}
\varphi_m'(x)=\eta_m\varphi_{m-1}(x)+\theta_m\varphi_m(x)+b_m\varphi_{m+1}(x),
\end{displaymath}
where
\begin{eqnarray*}
  \eta_m&=&\sqrt{\frac{g_{m-1}^{(\alpha,\beta)}}{g_m^{(\alpha,\beta)}}} \left[-\frac{\alpha+\beta+2}{2}+\alpha+\beta+m+1\right] A_m,\\
  \theta_m&=&\frac{\beta-\alpha}{2}-\frac{\alpha+\beta+2}{2}B_m +m\left(\frac{\alpha-\beta}{\alpha+\beta+2m}-B_m\right)\!,\\
  b_m&=&\sqrt{\frac{g_{m+1}^{(\alpha,\beta)}}{g_m^{(\alpha,\beta)}}} \left(-\frac{\alpha+\beta+2}{2}-m\right)\!C_m.
\end{eqnarray*}
We require that $\eta_m=-b_{m-1}$ and $\theta_m=0$. However, before we look further into the above quantities, we note that necessarily $b_m<0$. Fortunately, this is an artefact of our choice of $k_m=0$: once we replace this with $k_m=m$, we obtain $b_m>0$, $m\in\BB{Z}_+$, as required. This requires long, yet simple algebra, best performed using a symbolic algebra package. It follows from \R{eq:1.8} that
\begin{eqnarray*}
  \sqrt{\frac{g_{m-1}^{(\alpha,\beta)}}{g_m^{(\alpha,\beta)}}} &=&\sqrt{\frac{m(\alpha+\beta+m)(\alpha+\beta+2m+1)}{(\alpha+m)(\beta+m)(\alpha+\beta+2m-1)}},\\
  \sqrt{\frac{g_{m+1}^{(\alpha,\beta)}}{g_m^{(\alpha,\beta)}}}&=&\sqrt{\frac{\alpha+m+1)(\beta+m+1)(\alpha+\beta+2m+1)}{(m+1)(\alpha+\beta+m+1)(\alpha+\beta+2m+3)}},
\end{eqnarray*}
hence all the conditions are satisfied for 
\begin{equation}
\label{eq:2.8}
b_m=\sqrt{\frac{(m+1)(\alpha+m+1)(\beta+m+1)(\alpha+\beta+m+1)}{(\alpha+\beta+2m+1)(\alpha+\beta+2m+3)}},\qquad m\in\BB{Z}_+.
\end{equation}

\setcounter{equation}{0}
\setcounter{figure}{0}
\section{A half-range setting}

In this section we take the model from the previous section a step further. Rather than requiring that our coefficients are equal to coefficients of a modified function in a single orthogonal polynomial basis, we consider two polynomial bases which are linked to the odd and even basis elements. {In principle this provides a much richer range of possibilities over the more restrictive setting of the previous section, but we will discuss in subsection 3.3 that we have not actually gained anything in the end.}

Specifically, suppose we have an orthonormal system $\Phi$ in $\CC{L}_2(\BB{R})$ such that $\varphi_m$ is an even function if $m$ is even and an odd function if $m$ is odd; let us call this an \emph{even-odd system}. Then we can re-express the expansion coefficients in the form
\begin{displaymath}
\hat{f}_{2m}=2\int_0^\infty \! f_\EE(x)\varphi_{2m}(x)\D x, \quad \hat{f}_{2m+1}=2\int_0^\infty \! f_\OO(x)\varphi_{2m+1}(x)\D x,\qquad m\in\BB{Z}_+,
\end{displaymath}
where
\begin{displaymath}
f_\EE(x)=\frac12[f(x)+f(-x)],\quad f_\OO(x)=\frac12[f(x)-f(-x)],\qquad x\in(0,\infty).
\end{displaymath}
In other words, we can `translate' the setting from the real line to $(0,\infty)$ and this forms the theme of this section. We suppose that the odd and even coefficients can each be expressed in a similar fashion to the previous section, but with separate expressions for each case, as follows. Let $\{r_m\}_{m\in\BB{Z}_+}$ and $\{s_m\}_{m\in\BB{Z}_+}$ be orthonormal polynomial systems with respect to the absolutely continuous measures $w_\EE(t)\!\D t$ and $w_\OO(t)\!\D t$, respectively, on $(-1,1)$. We wish to find all orthonormal, even-odd systems $\Phi$ with a tridiagonal, skew-symmetric differentiation matrix, i.e.
\begin{equation}
\label{eq:3.1}
\varphi_m'=-b_{m-1}\varphi_{m-1}+b_m\varphi_{m+1},\qquad m\in\BB{Z}_+,
\end{equation}
and such that the coefficients are equal to
\begin{eqnarray}
  \label{eq:3.2}
  \hat{f}_{2m}&=&\int_{-1}^1 \theta_\EE(t) f_\EE(h(t)) r_m(t) w_\EE(t) \D t,\\
  \nonumber
  \hat{f}_{2m+1}&=&\int_{-1}^1 \theta_\OO(t) f_\OO(h(t)) s_m(t) w_\OO(t) \D t,\qquad m\in\BB{Z}_+,
\end{eqnarray}
where $h : (-1,1) \to (0,\infty)$ and $\theta_\EE, \theta_\OO : (-1,1) \to \BB{R}$. 

We will make the following assumptions about $h$, $\theta_\EE$, $\theta_\OO$, $w_\EE$ and $w_\OO$. Just as in the previous section, we will \emph{deduce\/} more from these basic assumptions and our model.
\begin{itemize}
  \item $h$ maps onto the whole of $(0,\infty)$, is differentiable with $h'$ a measurable function, and $h'(t) > 0$ for all $t \in (-1,1)$. 
  \item This implies the existence of an inverse function $H : (0,\infty) \to (-1,1)$ which is differentiable with $h'(t)H'(h(t)) = 1$, which implies that $H'$ is also positive and measurable.
  \item $\theta_\EE$ is such that $t \mapsto \theta_\EE(t)\sqrt{w_\EE(t)/h'(t)} \in \CC{L}_\infty(\BB{R})$, and $\theta_\OO$ satisfies the analogous property. The motivation is exactly as in the previous section.
\end{itemize}

Changing variables to $x = h(t)$ yields
\begin{eqnarray}
  \label{eq:3.3}
  \hat{f}_{2m}&=&\int_{0}^\infty \Theta_\EE(x) f_\EE(x) r_m(H(x))\, \D x,\\
  \nonumber
  \hat{f}_{2m+1}&=&\int_{0}^\infty \Theta_\OO(x) f_\OO(x) s_m(H(x)) \, \D x,\qquad m\in\BB{Z}_+,
\end{eqnarray}
where $\Theta_\EE(x) = \theta_\EE(H(x))H'(x) w_\EE(H(x))$ and $\Theta_\OO(x) = \theta_\OO(H(x))H'(x) w_\OO(H(x))$. For this to hold for all $f \in \CC{L}_2(\BB{R})$, we must necessarily have the `half-range model',
\begin{eqnarray}
  \label{eq:3.4}
  \varphi_{2m}(x)&=&\Theta_\EE(x)r_m(H(x))\\
  \nonumber
  \varphi_{2m+1}(x)&=&\Theta_\OO(x)s_m(H(x)),\qquad m\in\BB{Z}_+.
\end{eqnarray}
We extend $H$ to the whole of $\BB{R}$ by setting $H(-x) = H(x)$. Since $\varphi_{0}$ is an even function, we must have that $\Theta_\EE$ is even function and likewise $\Theta_\OO$ is an odd function. Note that $\varphi_m$ is an infinitely differentiable function for all $m\in\BB{Z}_+$ because it is the Fourier transform of a superalgebraically decaying function by equation \eqref{eq:1.5}. Therefore, $\varphi_1(0) = 0$ in order to have oddness. It follows that $\Theta_\OO(0) = 0$. Allow us to place one more assumption into the mix: $\Theta_\EE(0) \neq 0$. Otherwise all basis functions vanish at the origin, rendering it clearly unsuitable for {the} approximation of functions which are in general \emph{nonzero\/} at the origin.

The rest of the section is devoted to proving the following.

\begin{theorem}\label{thm:alpha}
  All the systems \R{eq:3.4} which are orthonormal in $\CC{L}_2(\BB{R})$ and possess a tridiagonal, skew-symmetric differentiation matrix are, up to a linear change of variables,
  \begin{eqnarray}
    \label{eq:3.5}
    \varphi_{2m}(x)&=& \frac{2^{(2\alpha+1)/4}}{\sqrt{g_m^{(\alpha,-\frac12)}}}\frac{1} {\cosh^{1+\alpha}\!x} \PP_m^{(\alpha,-\frac12)}\!\left(1-\frac{2}{\cosh^2x}\right)\!,\\
    \nonumber
    \varphi_{2m+1}(x)&=&-\frac{2^{(2\alpha+3)/4}}{\sqrt{g_m^{(\alpha,\frac12)}}}  \frac{\sinh x}{\cosh^{2+\alpha}\!x} \PP_m^{(\alpha,\frac12)}\!\left(1-\frac{2}{\cosh^2x}\right)\!,\qquad m\in\BB{Z}_+,
  \end{eqnarray}
  for any $\alpha>-1$.
\end{theorem}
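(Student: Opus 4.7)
My plan is to mirror the necessary/sufficient structure used for Theorem~\ref{thm:alphabeta}, with the added complication of two polynomial systems coupled via \eqref{eq:3.1}. First, rewriting $\int_{-\infty}^\infty \varphi_{2m}\varphi_{2n}\D x$ and $\int_{-\infty}^\infty \varphi_{2m+1}\varphi_{2n+1}\D x$ via the substitution $t=H(x)$ yields the relations $w_\EE(t)=2\Theta_\EE(h(t))^2 h'(t)$ and $w_\OO(t)=2\Theta_\OO(h(t))^2 h'(t)$, while the cross inner products vanish automatically by parity. This is the analogue of \eqref{eq:2.6}.

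Next, inserting \eqref{eq:3.4} into \eqref{eq:3.1} and reading off the cases $m=0,1$ uses the facts that $r_0,s_0$ are constants and $r_1,s_1$ are degree one, giving $\Theta_\EE'(x)=c_0\Theta_\OO(x)$ and $\Theta_\OO'(x)=(c_1+c_2 H(x))\Theta_\EE(x)$ for constants $c_0,c_1,c_2$ determined by $b_0,b_1$ and the leading coefficients of $r_1,s_1$. Substituting these back into the general even and odd cases and changing variable to $t=H(x)$ produces two polynomial identities in $t$ that involve $h'(t)$ together with $\rho(t):=\Theta_\EE(h(t))/\Theta_\OO(h(t))$. Exactly as in the proof of Theorem~\ref{thm:alphabeta}, comparing degrees on either side forces $h'(t)$ to be the reciprocal of a low-degree polynomial; combined with the boundary behaviour $h(-1^+)=0$, $h(1^-)=\infty$ and $h'>0$ on $(-1,1)$ (from $h:(-1,1)\to(0,\infty)$ being a strictly increasing bijection), this pins down $H(x)=1-2/\cosh^2 x$ up to an affine change of variable.

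Then the ODEs of the previous paragraph, together with $w_\EE=2\Theta_\EE^2 h'$ and $w_\OO=2\Theta_\OO^2 h'$, yield first-order linear ODEs for $\log w_\EE(t)$ and $\log w_\OO(t)$ whose solutions must be integrable on $(-1,1)$. The qualitative constraint $\Theta_\OO(0)=0$ (a simple zero, since $\varphi_1$ is odd and smooth with $\varphi_1(0)=0$) versus $\Theta_\EE(0)\neq 0$ forces a Jacobi exponent of $-\tfrac12$ at $t=-1$ in $w_\EE$ and $+\tfrac12$ in $w_\OO$, while a single free parameter $\alpha>-1$ remains as the exponent at $t=+1$, controlled by the decay of $\Theta_\EE$ and $\Theta_\OO$ at $x\to\infty$. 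Hence $r_m,s_m$ are the orthonormal Jacobi polynomials $\PP_m^{(\alpha,-1/2)},\PP_m^{(\alpha,1/2)}$ and \eqref{eq:3.5} follows. For sufficiency, I would not grind out the Jacobi recurrence a second time; instead I would exploit the quadratic transformation $\PP_{2m}^{(\alpha,\alpha)}(u)\propto \PP_m^{(\alpha,-1/2)}(2u^2-1)$ and $\PP_{2m+1}^{(\alpha,\alpha)}(u)\propto u\PP_m^{(\alpha,1/2)}(2u^2-1)$ with $u=\tanh x$, noting that $2\tanh^2 x-1=1-2/\cosh^2 x$, to identify \eqref{eq:3.5} with the even/odd split of the symmetric $\alpha=\beta$ case of Theorem~\ref{thm:alphabeta}, whose orthonormality and skew-symmetric tridiagonal differentiation matrix are already established.

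The main obstacle I foresee is the disentangling in the second paragraph: the two coupled polynomial identities involve $\rho$, which a priori is not a polynomial in $t$. The unlocking observation is that the $m=0$ case already identifies $\Theta_\EE'$ with a multiple of $\Theta_\OO$, so $\rho$ satisfies a first-order ODE in $t$ whose singular structure at $t=-1$ is controlled by $\Theta_\OO(0)=0$; once this structure is fed back into the higher-$m$ identities, degree counting proceeds as in Section~2 and the derivation runs to completion.
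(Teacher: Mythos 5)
Your proposal is correct and follows essentially the same route as the paper: orthonormality forces $w_\EE=2\Theta_\EE^2h'$, $w_\OO=2\Theta_\OO^2h'$, the $m=0,1$ cases of \eqref{eq:3.1} plus degree counting pin down $h'$ and the ratio $w_\OO/w_\EE$, the boundary behaviour and $\Theta_\OO(0)=0$, $\Theta_\EE(0)\neq0$ force $H(x)=1-2/\cosh^2x$ and the Jacobi weights $(1-t)^\alpha(1+t)^{\mp 1/2}$, and sufficiency is obtained via the quadratic transformations \cite[18.7.13--14]{DLMF} identifying \eqref{eq:3.5} with the $\beta=\alpha$ case of Theorem~\ref{thm:alphabeta}. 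The one obstacle you flag (the non-polynomial ratio $\rho=\Theta_\EE/\Theta_\OO$) dissolves immediately using your own first paragraph, since orthonormality gives $\rho^2=w_\EE/w_\OO$, and multiplying/dividing the two $m=1$ identities then isolates $h'$ and $w_\OO/w_\EE$ separately --- which is exactly how the paper proceeds.
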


We show that these bases are equal to the bases in Theorem \ref{thm:alphabeta} with $\beta = \alpha$, something which is far from obvious at first sight. All discussions of mathematical properties of these functions such as completeness in $\CC{L}_2(\BB{R})$ can therefore be derived from properties of the functions in Theorem \ref{thm:alphabeta}.

\subsection{Necessary conditions}

The first condition to explore is orthonormality. Since the $\varphi_m$s have the parity of $m$ on the real line, we have
\begin{displaymath}
\int_{-\infty}^\infty \varphi_{2m}(x)\varphi_{2n+1}(x)\D x=0,\qquad m,n\in\BB{Z}_+
\end{displaymath}
and need to check orthogonality only within each set. Changing variables, it follows from \R{eq:3.4} that
\begin{displaymath}
\int_{-\infty}^\infty \varphi_{2m}(x)\varphi_{2n}(x)\D x=2 \int_0^\infty \varphi_{2m}(x)\varphi_{2n}(x)\D x=2\int_{-1}^1 \Theta_\EE^2(h(t))h'(t) r_m(t)r_n(t)\D t.
\end{displaymath}
Since $\{r_m\}_{m\in\BB{Z}_+}$ is an orthonormal set with respect to $w_\EE(t)\!\D t$, we must have
\begin{equation}
\label{eq:3.6}
w_\EE(t)=2\Theta_\EE^2(h(t))h'(t),\qquad t\in(-1,1),
\end{equation}
and, by the same token,
\begin{equation}
\label{eq:3.7}
w_\OO(t)=2\Theta_\OO^2(h(t))h'(t),\qquad t\in(-1,1).
\end{equation}
Note that, by the monotonicity of $h$, both weight functions are nonnegative, as required. These expressions can be inverted: changing  back to $x$,
\begin{equation}
\label{eq:3.8}
\Theta_\EE(x)=\sqrt{\frac12 H'(x)w_\EE(H(x))},\quad \Theta_\OO(x)=\pm\sqrt{\frac12 H'(x)w_\OO(H(x))},\qquad x\in(0,\infty).
\end{equation}
Observe that we need to be very careful in our choice of sign. As things stand, we allow for both options. 

Our next, considerably more challenging task is to ensure the existence of a differentiation matrix of the correct form. Recall that both $\Theta_\EE$ and $\Theta_\OO$ are smooth functions on $\BB{R}$. Substituting \eqref{eq:3.4} into \eqref{eq:3.1}, we have 
\begin{eqnarray}
  \label{eqn:halfrangeODE}
  \Theta_\EE'r_m(H)+\Theta_\EE H'r_m'(H)&=&\Theta_\OO[b_{2m-1}s_{m-1}(H)+b_{2m}s_m(H)],\\
  \nonumber
  \Theta_\OO's_m(H)+\Theta_\OO H's_m'(H)&=&\Theta_\EE[-b_{2m-1}r_m(H)+b_{2m+1}r_{m+1}(H)],\qquad m\in\BB{Z}_+.
\end{eqnarray}
Setting $m=0$ yields
\begin{eqnarray}
  \label{eqn:ThetaEEprime}
  \Theta_\EE'(x) &=&\Theta_\OO(x) b_{0}s_0/r_0,\\
  \nonumber
  \Theta_\OO'(x) &=&\Theta_\EE(x) b_{1}r_{1}(H(x))/s_0.
\end{eqnarray}
Substituting these back into \eqref{eqn:halfrangeODE} and changing variables back to $t$, we have,
\begin{eqnarray}
  \label{eq:3.9}
  r_m'(t)&=&h'(t)\sqrt{\frac{w_\OO(t)}{w_\EE(t)}} A_m(t) \\
  \label{eq:3.10}
  s_m'(t)&=&h'(t)\sqrt{\frac{w_\EE(t)}{w_\OO(t)}} B_m(t),\qquad m\in\BB{N},
\end{eqnarray}
where
\begin{eqnarray*}
  A_m(t)&=&-b_{2m-1}s_{m-1}(t)+b_{2m}s_m(t)-\frac{b_0s_0}{r_0} r_m(t),\\
  B_m(t)&=&-b_{2m}r_m(t)+b_{2m+1}r_{m+1}(t)+\frac{b_0 r_0-b_1r_1(t)}{s_0} s_m(t),\qquad m\in\BB{N}.
\end{eqnarray*}
The way forward rests upon the observation that the left-hand sides of both \eqref{eq:3.9} and \eqref{eq:3.10} are $(m-1)$-degree polynomials, and this places important constraints upon their right-hand sides. This is similar to the analysis of Section~2 yet considerably more complicated.

Setting $m=1$, taking products of equations \eqref{eq:3.9} and \eqref{eq:3.10} and performing some simple algebra, we obtain,
\begin{displaymath}
h'(t) = \sqrt{\frac{r_1' s_1'}{A_1(t)B_1(t)}} \qquad\text{and}\qquad \frac{w_\OO(t)}{w_\EE(t)} = \frac{r_1' B_1(t)}{s_1' A_1(t)}.
\end{displaymath}
Since $r_1$, $s_1$, and $A_1$ are polynomials of degree 1 and $B_1$ is a polynomial of degree 2, there exist constants $a,b,c, \gamma_1,\gamma_2$ (which are real except that $b$ and $c$ may be complex conjugates) such that
\begin{equation}
h'(t) = \frac{\gamma_1}{\sqrt{(1-at)(1-bt)(1-ct)}} \quad\text{and}\quad \frac{w_\OO(t)}{w_\EE(t)} = \gamma_2\frac{(1-bt)(1-ct)}{1-at}.
\end{equation}
Substituting the ratio of $w_\OO(t)$ and $w_\EE(t)$ for $t \in (-1,1)$ into equations \eqref{eq:3.6} and \eqref{eq:3.7}, we find
\begin{equation}
\label{eq:3.11}
\Theta_\OO(h(t)) = \sqrt{\gamma_2\frac{(1-bt)(1-ct)}{1-at}} \,\Theta_\EE(h(t)).
\end{equation}
Since $h(-1) = 0$, $\Theta_\OO(0) = 0$ and $\Theta_\EE(0) \neq 0$ we deduce (without loss of generality) that $c = -1$. Since $\lim_{t\to 1}h(t) = +\infty$, we must have, integrating $h'(t)$,
\begin{displaymath}
\lim_{t \to 1} \int_{-1}^t \frac{1}{\sqrt{(1-as)(1-bs)(1+s)}} \, \D s = +\infty.
\end{displaymath}
This is only possible if $a = b = 1$. Therefore,
\begin{displaymath}
h'(t) = \frac{\gamma_1}{(1-t)\sqrt{1+t}}, \qquad w_\OO(t) = \gamma_2 (1+t) w_\EE(t).
\end{displaymath}
The formula for $h'(t)$ is readily integrated using the substitution $\tanh u = \sqrt{(1+s)/2}$, giving
\begin{displaymath}
h(t) = \sqrt{2}\gamma_1\arctanh\sqrt{\frac{1+t}{2}}.
\end{displaymath}
Inverting, and ignoring a linear change of variables in $x$, we have whittled our way down to a single option,
\begin{displaymath}
H(x) = 1 - \frac{2}{\cosh^2x}.
\end{displaymath}
Now let us find $\Theta_\EE(x)$. We know that $\Theta_\EE'(x) =  \Theta_\OO(x) b_0s_0r_0$ by equation \eqref{eqn:ThetaEEprime} and $\Theta_\OO(x) = \sqrt{\gamma_2(1+H(x))}\Theta_\EE(x)$ by equation \eqref{eq:3.11}. Hence,
\begin{displaymath}
\frac{\D}{\D t} \Theta_\EE(h(t)) = \Theta_\EE'(h(t)) h'(t) = \frac{\Theta_E(h(t))}{1-t} \sqrt{\gamma_2}\gamma_1b_0s_0/r_0.
\end{displaymath}
In consequence, $\Theta_\EE(h(t)) \propto (1-t)^{\frac{\alpha+1}{2}}$ for some $\alpha > -1$, and, converting to the $x$ variable, we have
\begin{displaymath}
\Theta_\EE(x) \propto \frac{1}{\cosh^{\alpha+1}(x)}, \qquad \Theta_\OO(x) \propto \frac{\sinh(x)}{\cosh^{2+\alpha}(x)}.
\end{displaymath}
Equations \eqref{eq:3.6} and \eqref{eq:3.7} give us the weights,
\begin{displaymath}
w_\EE(t) = (1-t)^\alpha (1+t)^{-\frac12}, \qquad w_\OO(t) = (1-t)^\alpha (1+t)^{\frac12},
\end{displaymath}
where without loss of generality $w_\EE(0) = w_\OO(0) = 1$.

\subsection{Sufficient conditions}

As things stand, we have identified one -- and just one -- one-parameter family of weights $\{w_\EE,w_\OO\}$ for which we might be able to obtain an orthonormal system \eqref{eq:3.4} with a tridiagonal skew-symmetric differentiation matrix:
\begin{eqnarray*}
  &&w_\EE(t)=(1-t)^\alpha(1+t)^{-\frac12},\quad w_\OO(t)=(1-t)^\alpha(1+t)^{\frac12},\qquad t\in(-1,1),\quad \alpha>-1,\\
  &&r_m(t)=\frac{1}{\sqrt{g_m^{(\alpha,-\frac12)}}}\PP_m^{(\alpha,-\frac12)}(t),\quad s_m(t)=\frac{1}{\sqrt{g_m^{(\alpha,\frac12)}}}\PP_m^{(\alpha,\frac12)}(t),\qquad m\in\BB{Z}_+,\\
  &&h(t)=\tanh^{-1}\sqrt{\frac{1+t}{2}},\qquad H(x)=1-\frac{2}{\cosh^2x},\\
  &&\Theta_\EE(x)=2^{\frac14+\frac{\alpha}{2}} \frac{1}{\cosh^{1+\alpha}x},\qquad \Theta_\OO(x)=-2^{\frac34+\frac{\alpha}{2}} \frac{\sinh x}{\cosh^{2+\alpha}x},
\end{eqnarray*}
where we have used \R{eq:3.8} (with a minus sign) to determine $\Theta_\EE$ and $\Theta_\OO$. The question is, do the systems here actually satisfy our requirements for all $\alpha > -1$? The following subsection answers this question in the affirmative by relating these functions to the full-range systems from Section 2.

\subsection{The connection to full-range systems}

Further investigation of these half-range systems leads one to the conclusion the half-range systems of Theorem \ref{thm:alpha} are a special case of full-range systems of Theorem \ref{thm:alphabeta} with $\beta = \alpha$. The formul\ae{} look completely different, but as we will now show, they are identical.

{ For half-range functions
  \begin{displaymath}
  \varphi_{2m}(x)=\frac{2^{\frac{2\alpha+1}{4}}}{\sqrt{g_m^{(\alpha,-\frac12)}}} \frac{1}{\cosh^{\alpha+1}\!x} \PP_m^{(\alpha,-\frac12)}\!\left(1-\frac{2}{\cosh^2x}\right)\!,
  \end{displaymath}
  while for full range (with $\beta$ set to $\alpha$),
  \begin{eqnarray*}
    \varphi_{2m}(x)&=&\frac{1}{\sqrt{g_{2m}^{(\alpha,\alpha)}}} (1-\tanh^2x)^{\frac{\alpha+1}{2}}\CC{P}_{2m}^{(\alpha,\alpha)}(\tanh x).
  \end{eqnarray*}
  
  Now, by \cite[18.7.13]{DLMF}, $\PP_{2m}^{(\alpha,\alpha)}(X) \propto \PP_{m}^{(\alpha,-\frac12)}(2X^2-1)$. Using this with $X = \tanh x $, along with the identity $\tanh^2 x = 1- \mathrm{sech}^2 x$ it follows readily that $\PP_{2m}^{(\alpha,\alpha)}(\tanh x ) \propto \PP_{m}^{(\alpha,-\frac12)}(1 - 2/\cosh^2 x)$. In addition, we know that $(1-\tanh^2 x)^{\frac{\alpha+1}{2}} \propto \mathrm{sech}^2 x $. Combining the identities discussed in this paragraph, we arrive at the proportionality statement,
  
  \begin{equation*}
  \frac{1}{\cosh^{\alpha+1}\!x} \PP_m^{(\alpha,-\frac12)}\!\left(1-\frac{2}{\cosh^2x}\right) \propto (1-\tanh^2x)^{\frac{\alpha+1}{2}}\CC{P}_{2m}^{(\alpha,\alpha)}(\tanh x).
  \end{equation*}
  
  The fact that the constants which depend on $\alpha$ but not $x$ are uniquely determined to ensure that $\varphi_{2m}$ has $\CC{L}_2(\BB{R})$ norm equal to 1 proves that the two expressions for $\varphi_{2m}$ given above are identical.
  
  Likewise, in a half-range formalism,
  \begin{displaymath}
  \varphi_{2m+1}(x)=-\frac{2^{\frac\alpha2+\frac34}}{\sqrt{g_m^{(\alpha,\frac12)}}} \frac{\sinh x}{\cosh^{2+\alpha}x} \PP_m^{(\alpha,\frac12)}\!\left(1-\frac{2}{\cosh^2x}\right)\!,
  \end{displaymath}
  while the full-range expression is 
  \begin{eqnarray*}
    \varphi_{2m+1}(x)&=&\frac{(-1)^m}{\sqrt{g_{2m+1}^{(\alpha,\alpha)}}} (1-\tanh^2 x)^{\frac{\alpha+1}{2}} \PP_{2m+1}^{(\alpha,\alpha)}(\tanh x).
  \end{eqnarray*}
  This time, we have by \cite[18.7.14]{DLMF} that $\PP_{2m+1}^{(\alpha,\alpha)}(X) \propto X\PP_{m}^{(\alpha,\frac12)}(2X^2-1)$. Using this with $X = \tanh x $, along with the identity $\tanh^2 x = 1- \mathrm{sech}^2 x$ it follows readily that $\PP_{2m+1}^{(\alpha,\alpha)}(\tanh x) \propto \tanh x \PP_{m}^{(\alpha,\frac12)}(1 - 2/\cosh^2 x)$. In addition, we know that $(1-\tanh^2 x )^{\frac{\alpha+1}{2}} \propto \mathrm{sech}^2 x$. Combining the identities discussed in this paragraph, we arrive at the proportionality statement,
  \begin{equation*}
  \frac{\sinh x}{\cosh^{2+\alpha}\!x} \PP_m^{(\alpha,\frac12)}\!\left(1-\frac{2}{\cosh^2x}\right) \propto (1-\tanh^2x)^{\frac{\alpha+1}{2}}\CC{P}_{2m+1}^{(\alpha,\alpha)}(\tanh x).
  \end{equation*}
  
  The fact that the constants which depend on $\alpha$ but not $x$ are uniquely determined to ensure that $\varphi_{2m+1}$ has $\CC{L}_2(\BB{R})$ norm equal to 1 proves that the two expressions for $\varphi_{2m+1}$ given above are identical.
  
  Theorem 2 therefore provides us with no new systems over the systems given by Theorem 1. This leads us to ask the question, could we have deduced Theorem 2 from Theorem 1, instead of conducting the full derivation of subsection 3.2? We believe the answer is no, but in principle if one could show that the model in equation (3.4) necessarily reduces to the model in equation (2.4), then Theorem 2 would indeed follow directly from Theorem 1.}

\setcounter{equation}{0}
\setcounter{figure}{0}
\section{The tanh-Jacobi functions}\label{sec:tanh-Jacobifunctions}

In Section 2 we identified the following orthonormal bases $\Phi^{(\alpha,\beta)} = \{\varphi^{(\alpha,\beta)}_m\}_{m\in\BB{Z}_+}$ for $\alpha,\beta > -1$,
\begin{displaymath}
\varphi^{(\alpha,\beta)}_m(x)=\frac{(-1)^m}{\sqrt{g_m^{(\alpha,\beta)}}} (1-\tanh x)^{\frac{\alpha+1}{2}}(1+\tanh x)^{\frac{\beta+1}{2}} \PP_m^{(\alpha,\beta)}(\tanh x),\qquad m\in\BB{Z}_+.
\end{displaymath}
{We call these function the \emph{tanh-Jacobi functions}, for obvious reasons.}  These orthonormal bases have a tridiagonal, irreducible, skew-symmetric differentiation matrix as in equation \eqref{eq:1.3}, with
\begin{displaymath}
b_m^{(\alpha,\beta)}=\sqrt{\frac{(m+1)(\alpha+m+1)(\beta+m+1)(\alpha+\beta+m+1)}{(\alpha+\beta+2m+1)(\alpha+\beta+2m+3)}},\qquad m\in\BB{Z}_+.
\end{displaymath}

In Section 3 we showed that in the special case where $\beta = \alpha$, there is an alternative, equivalent expression which separates the functions into odd and even parts. For all $\alpha > -1$, let
\begin{eqnarray*}
  \varphi^\alpha_{2m}(x)&=& \frac{2^{(2\alpha+1)/4}}{\sqrt{g_m^{(\alpha,-\frac12)}}}\frac{1} {\cosh^{1+\alpha}\!x} \PP_m^{(\alpha,-\frac12)}\!\left(1-\frac{2}{\cosh^2x}\right)\!,\\
  \varphi^\alpha_{2m+1}(x)&=&-\frac{2^{(2\alpha+3)/4}}{\sqrt{g_m^{(\alpha,\frac12)}}}  \frac{\sinh x}{\cosh^{2+\alpha}\!x} \PP_m^{(\alpha,\frac12)}\!\left(1-\frac{2}{\cosh^2x}\right)\!,\qquad m\in\BB{Z}_+.
\end{eqnarray*}
Then $\varphi^\alpha_m = \varphi^{(\alpha,\alpha)}_m$ for all $m \in \BB{Z}_+$, at least mathematically speaking. For computation of coefficients, this basis is different, as is discussed below.

\begin{figure}[htb!]
  \centering
  \includegraphics[width=.32\textwidth]{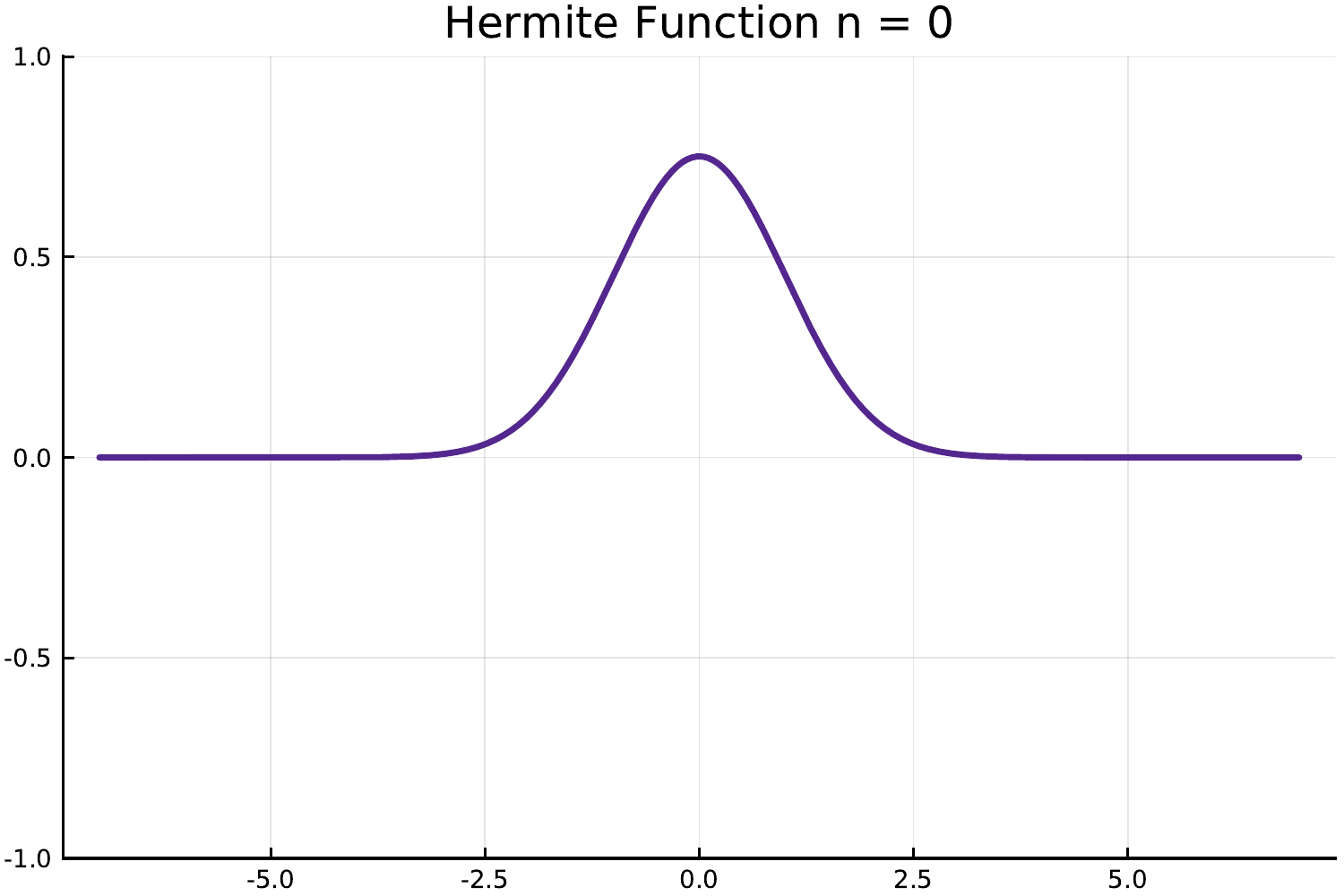}\,
  \includegraphics[width=.32\textwidth]{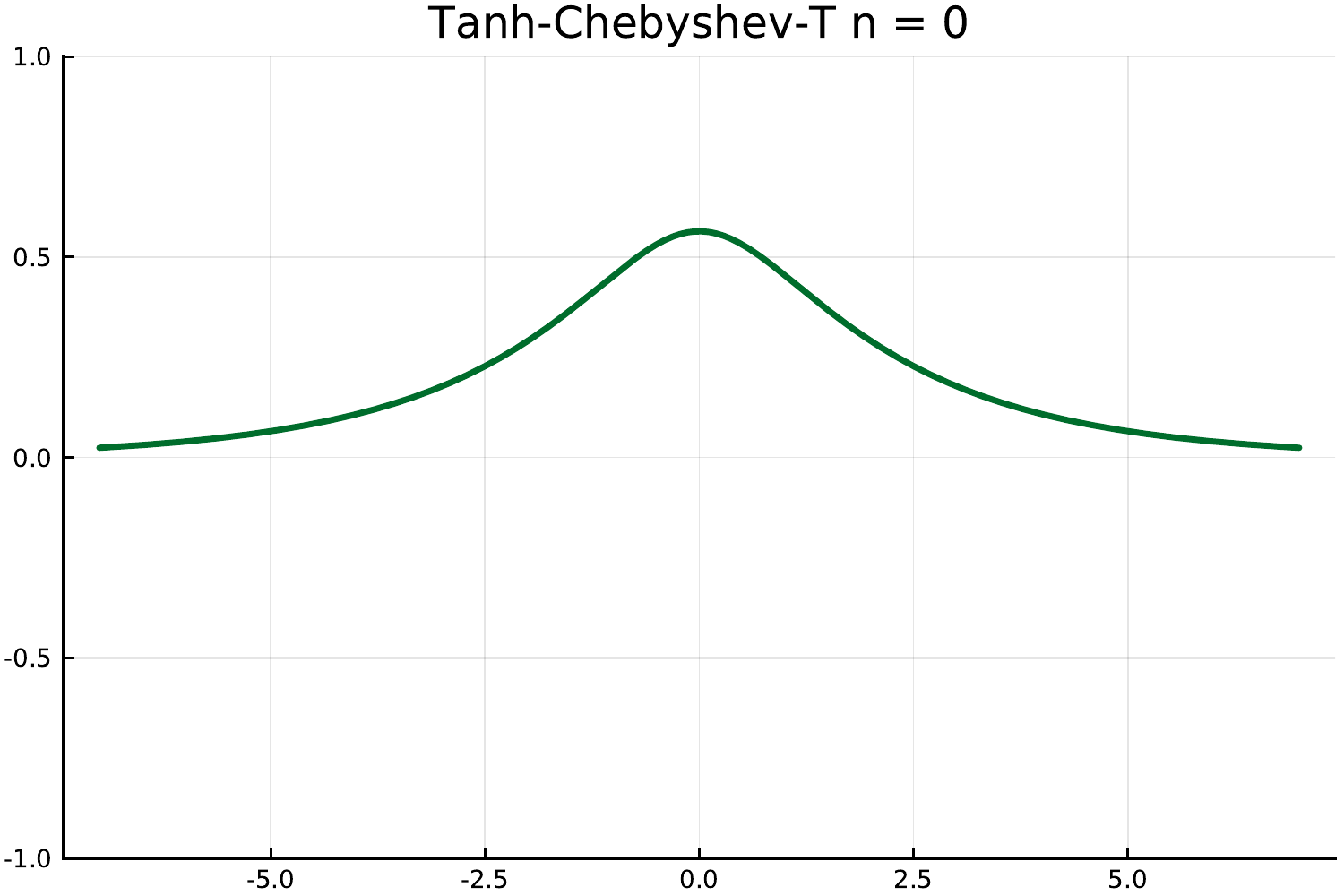}\,
  \includegraphics[width=.32\textwidth]{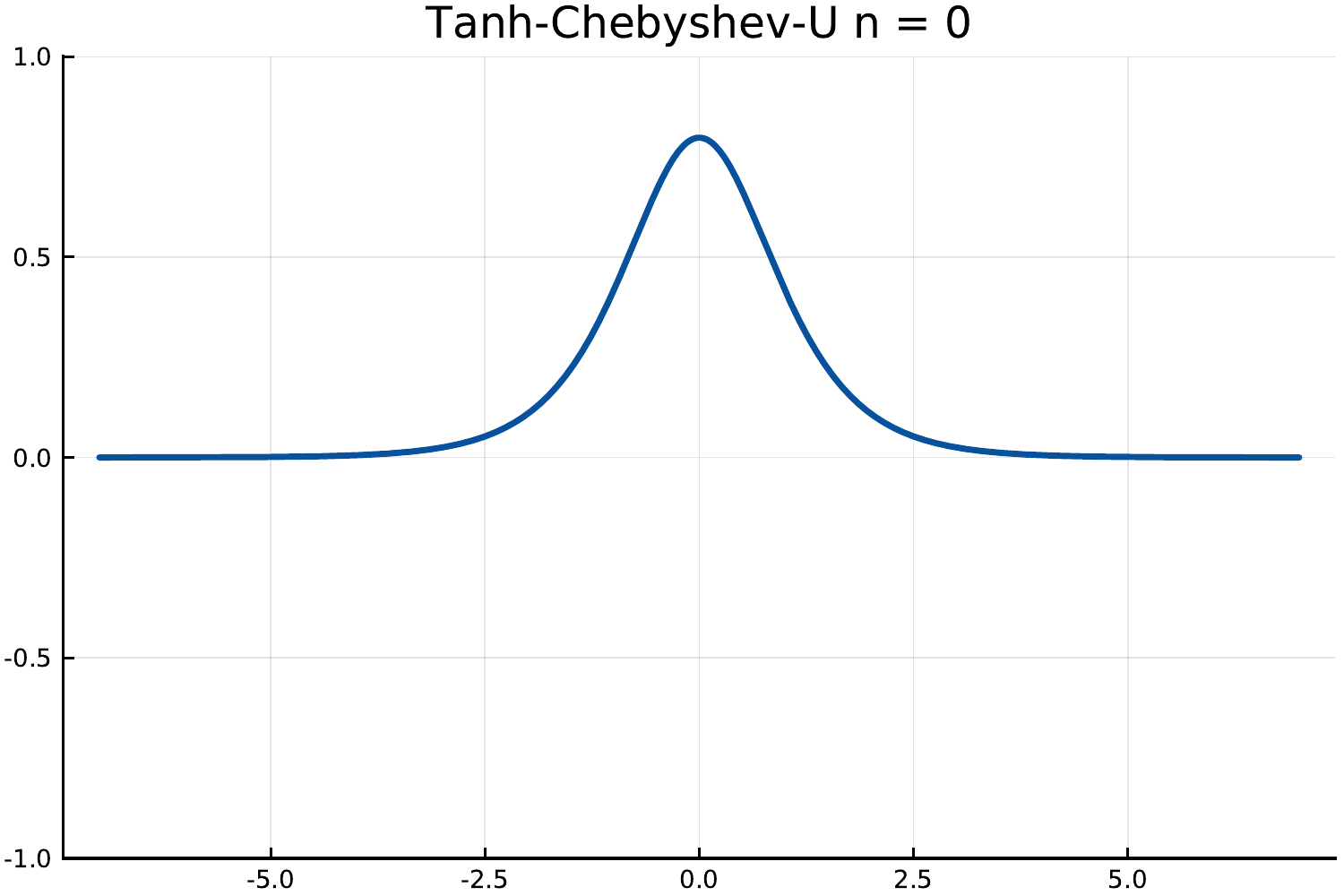}\\
  \vskip10pt
  \includegraphics[width=.32\textwidth]{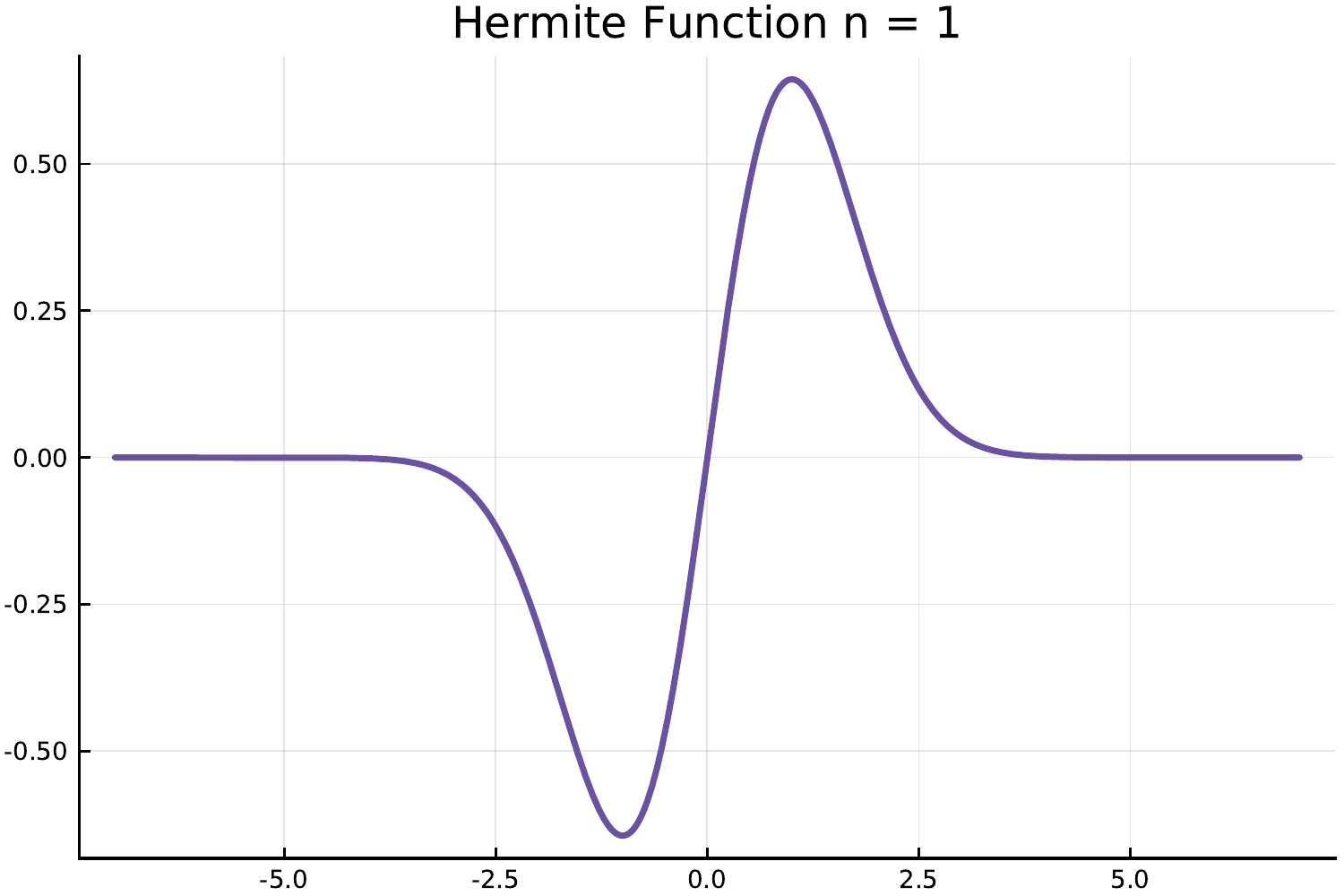}\,
  \includegraphics[width=.32\textwidth]{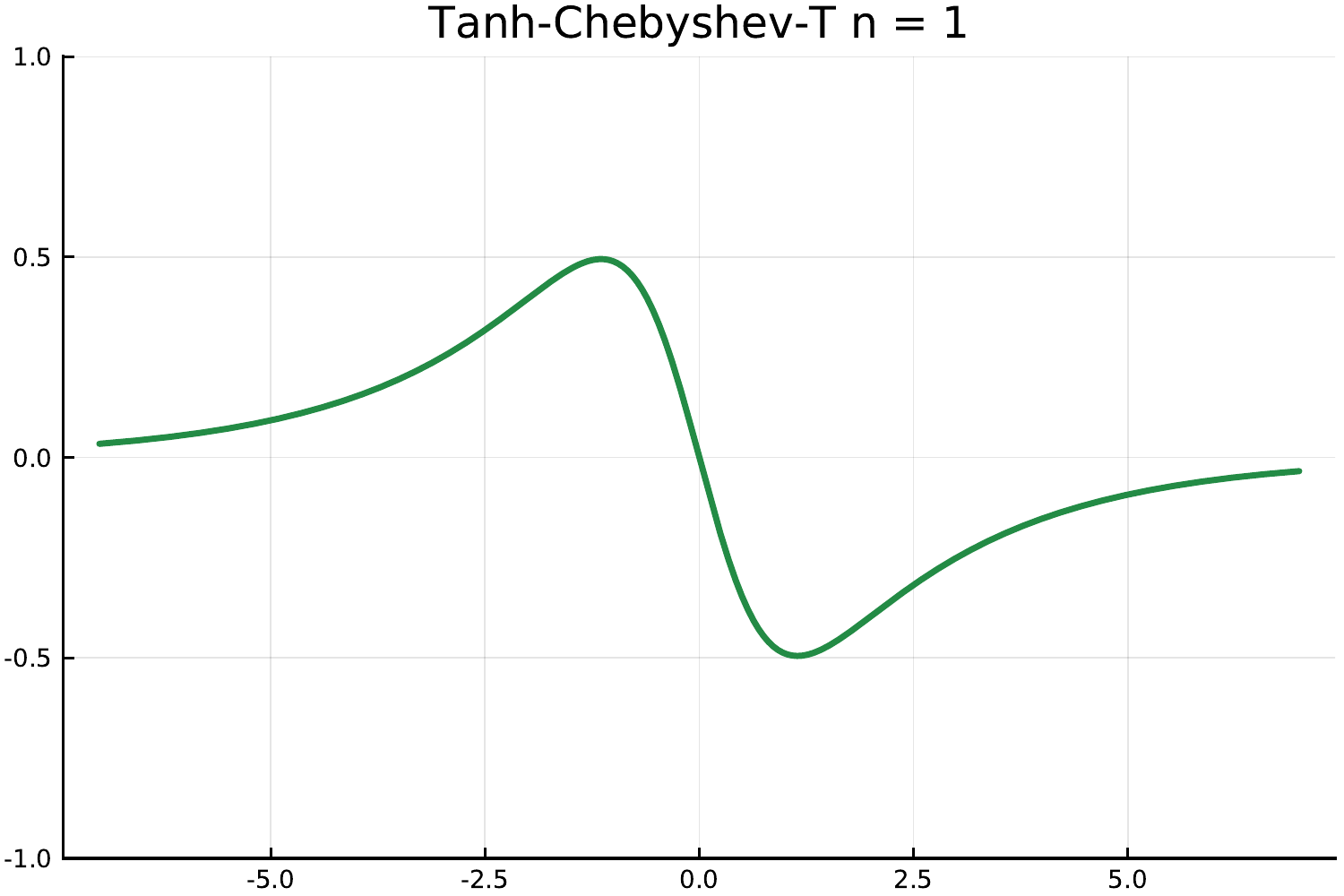}\,
  \includegraphics[width=.32\textwidth]{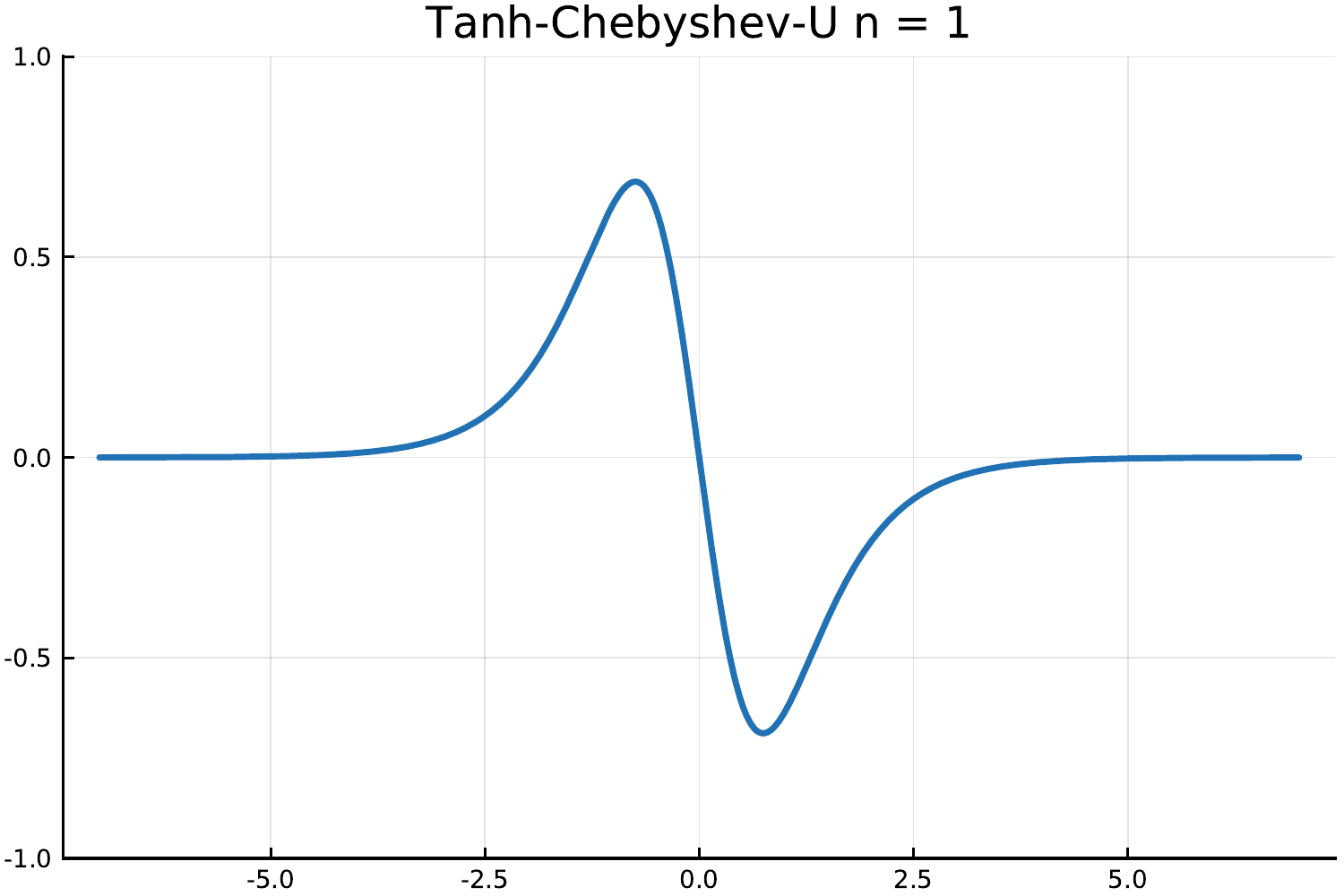}\\
  \vskip10pt
  \includegraphics[width=.32\textwidth]{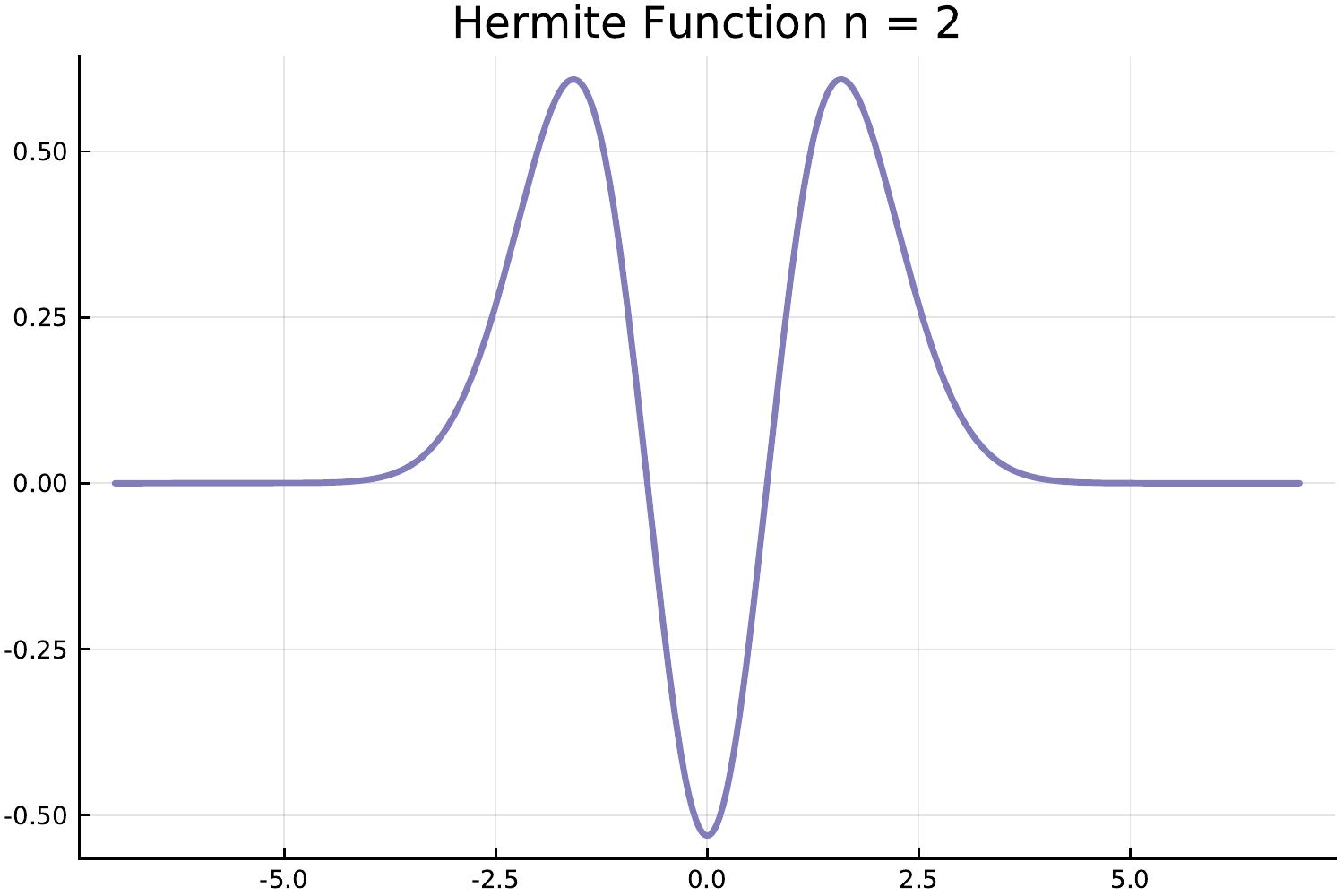}\,
  \includegraphics[width=.32\textwidth]{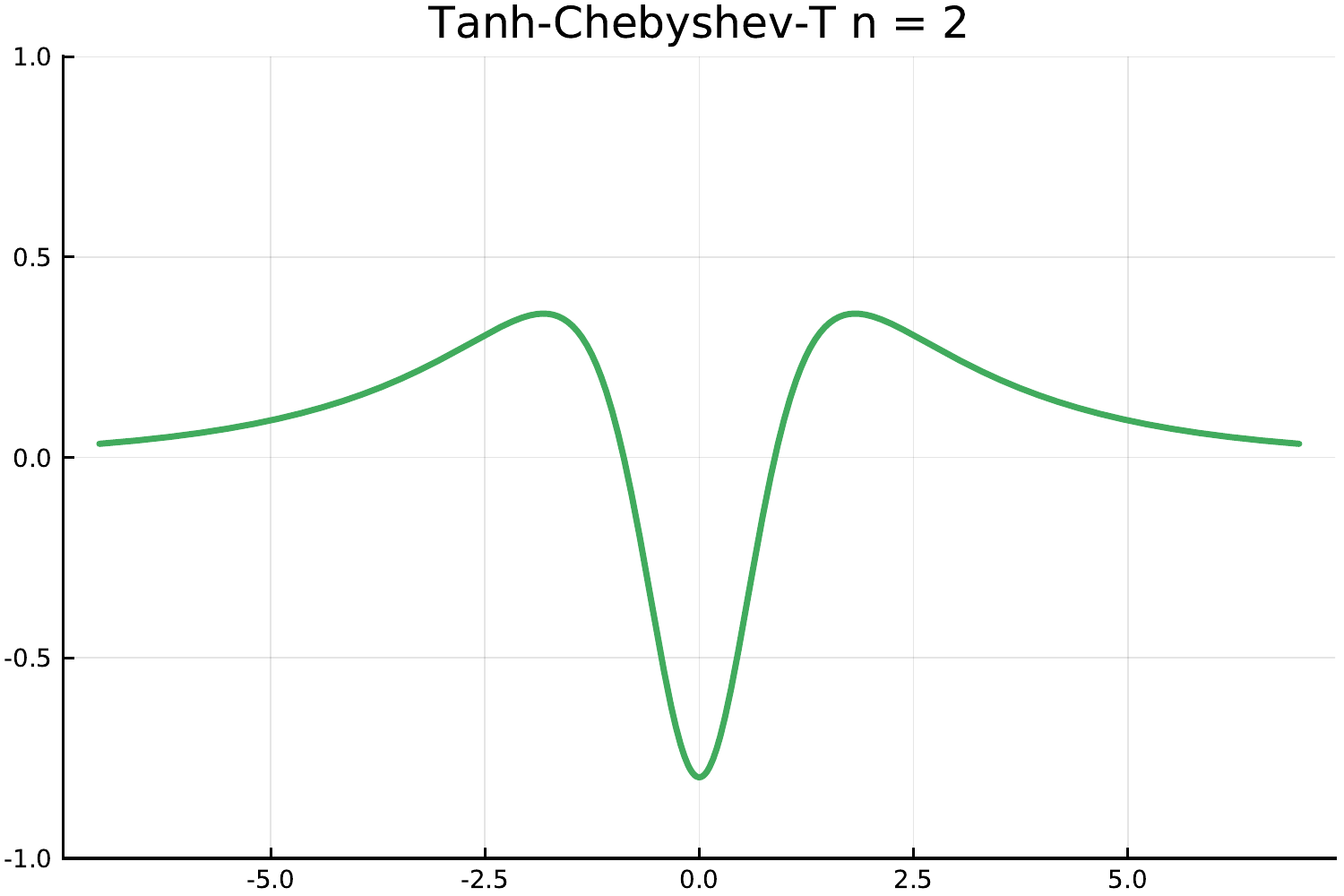}\,
  \includegraphics[width=.32\textwidth]{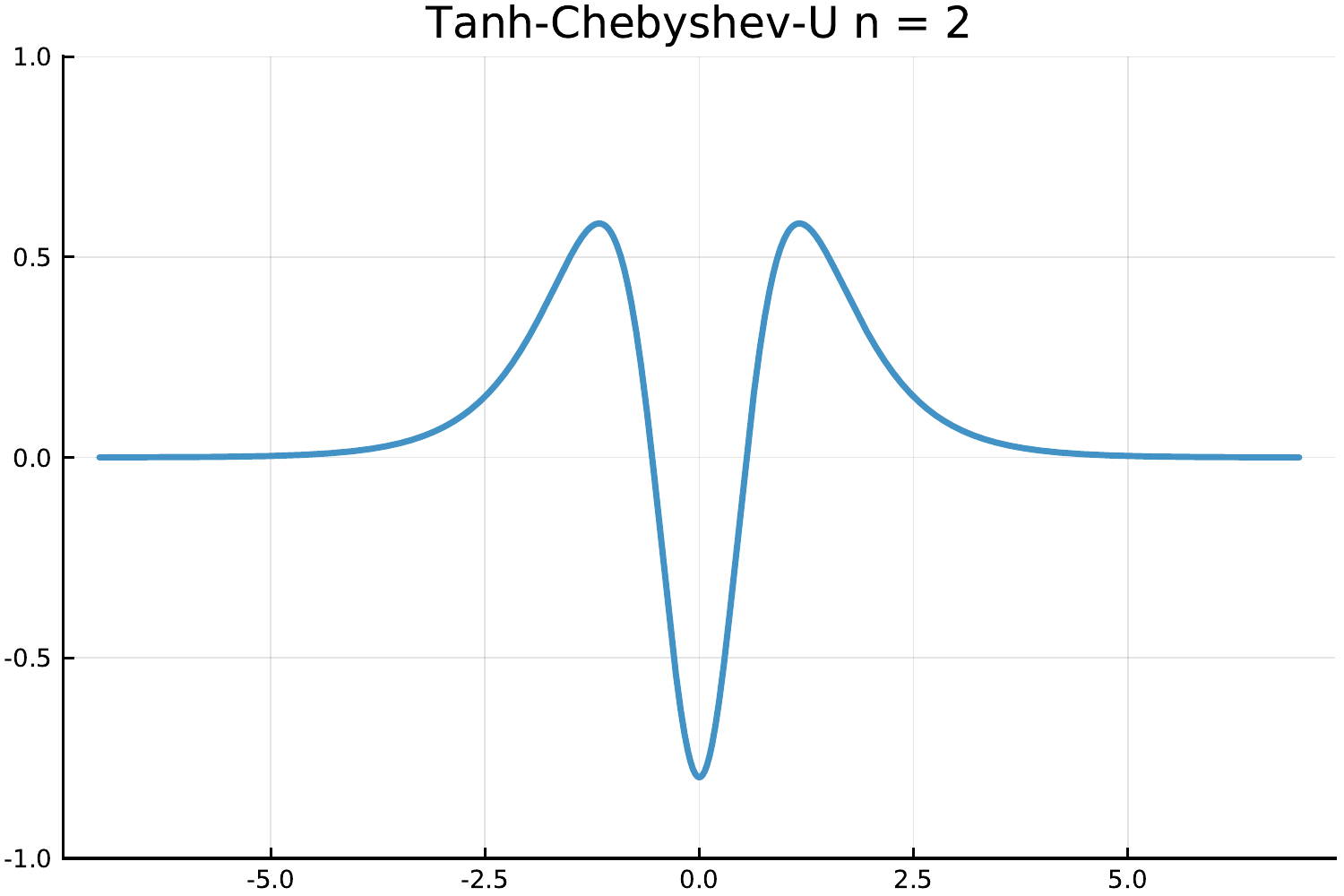}\\
  \vskip10pt
  \includegraphics[width=.32\textwidth]{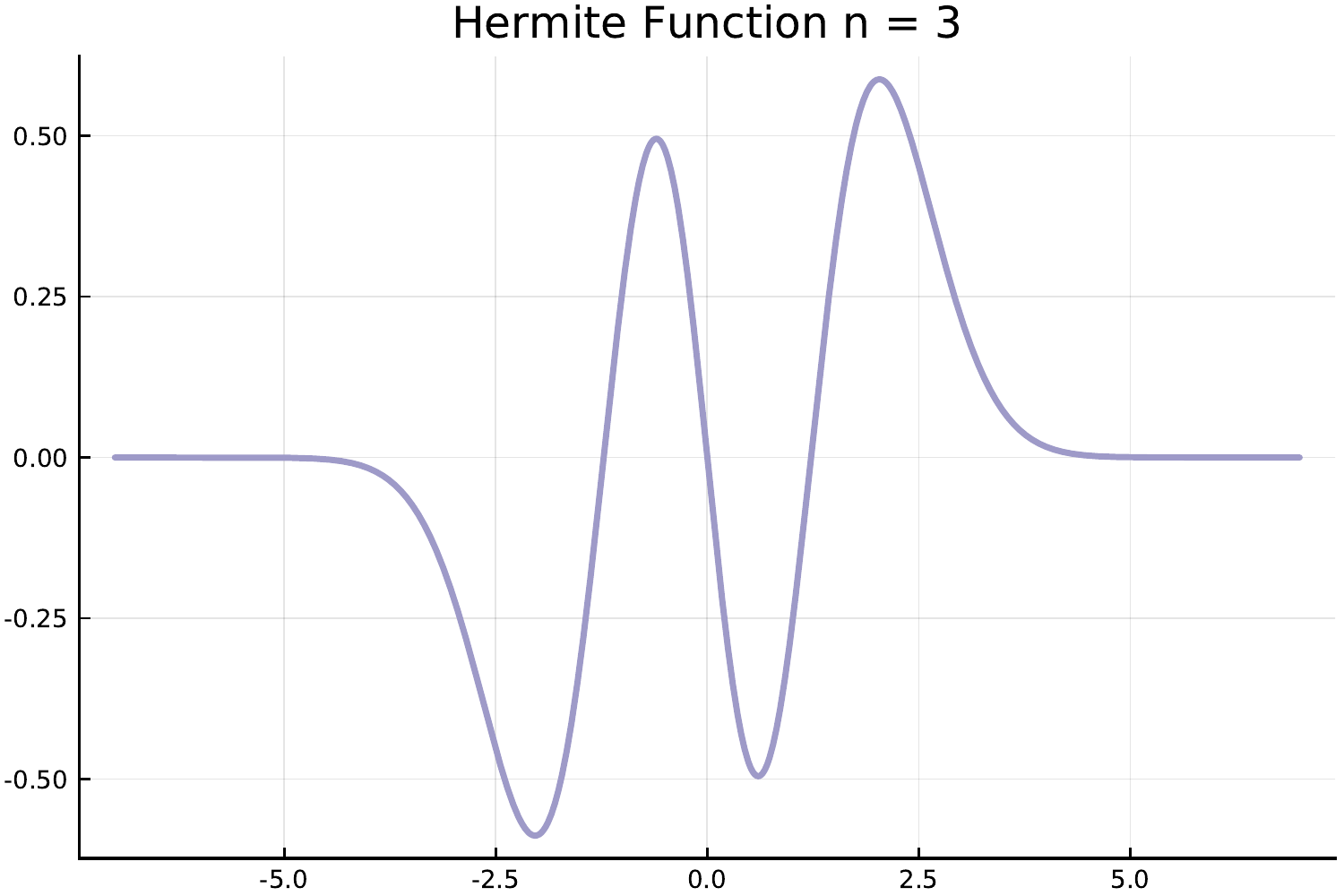}\, 
  \includegraphics[width=.32\textwidth]{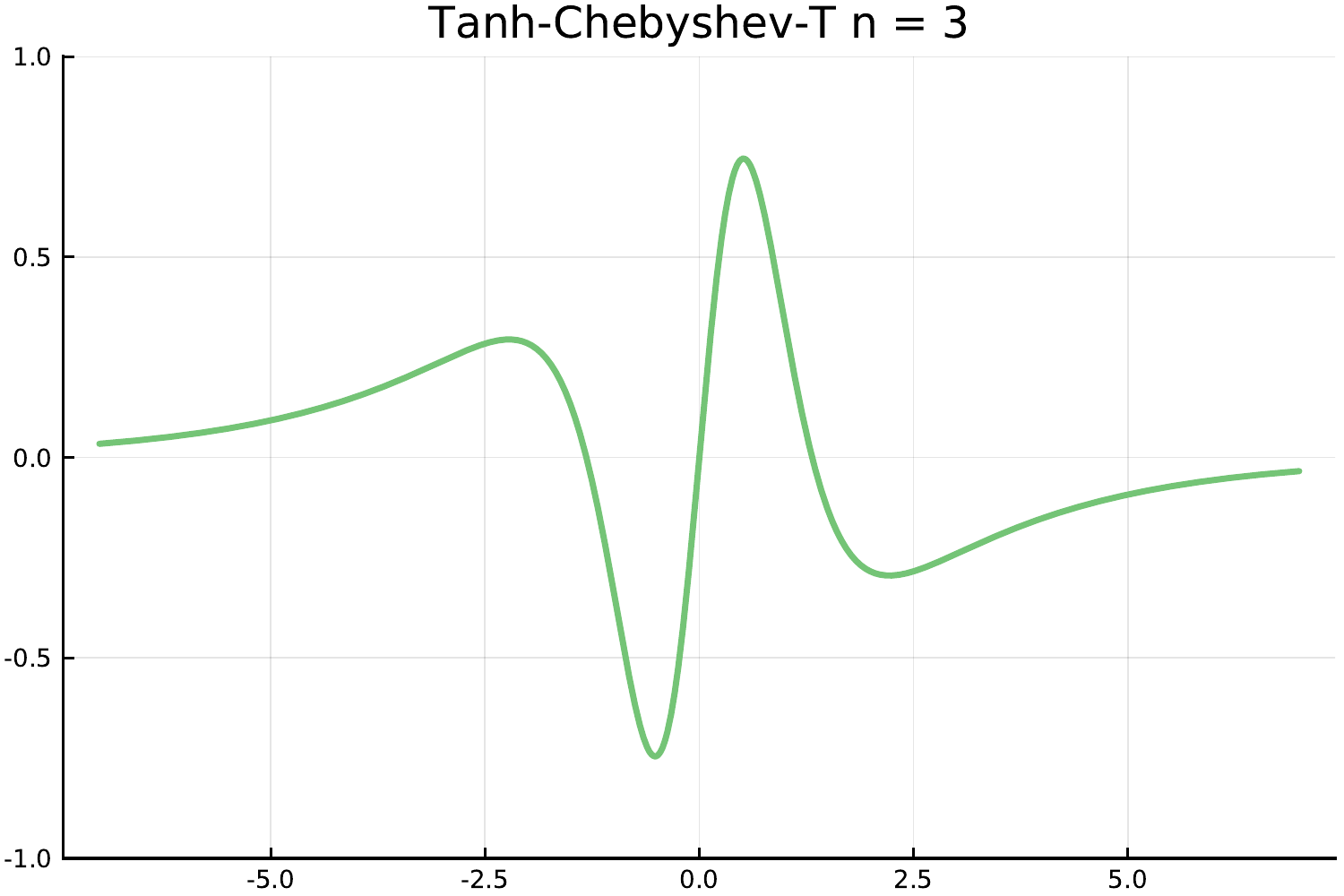}\,
  \includegraphics[width=.32\textwidth]{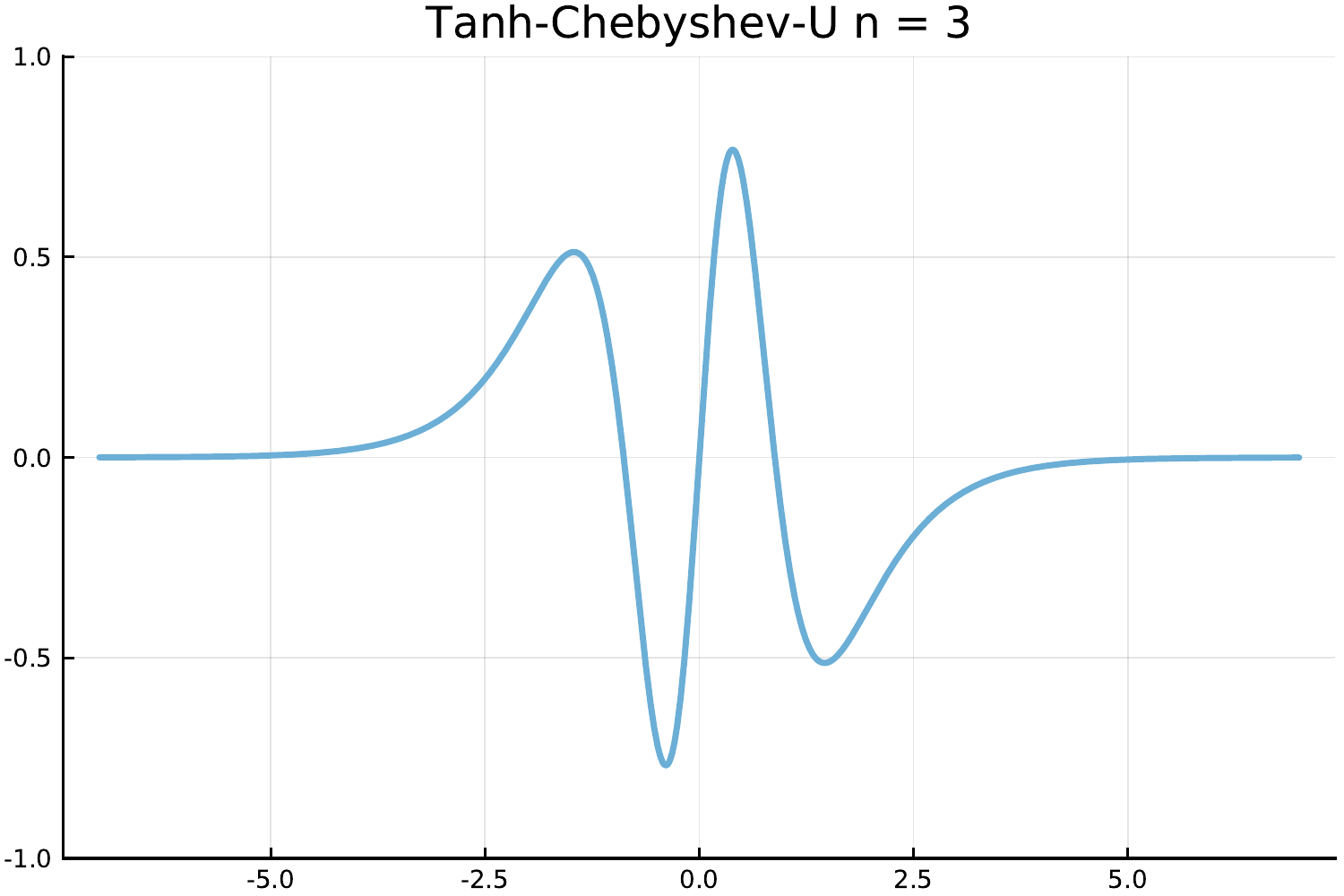}\\
  \vskip10pt
  \includegraphics[width=.32\textwidth]{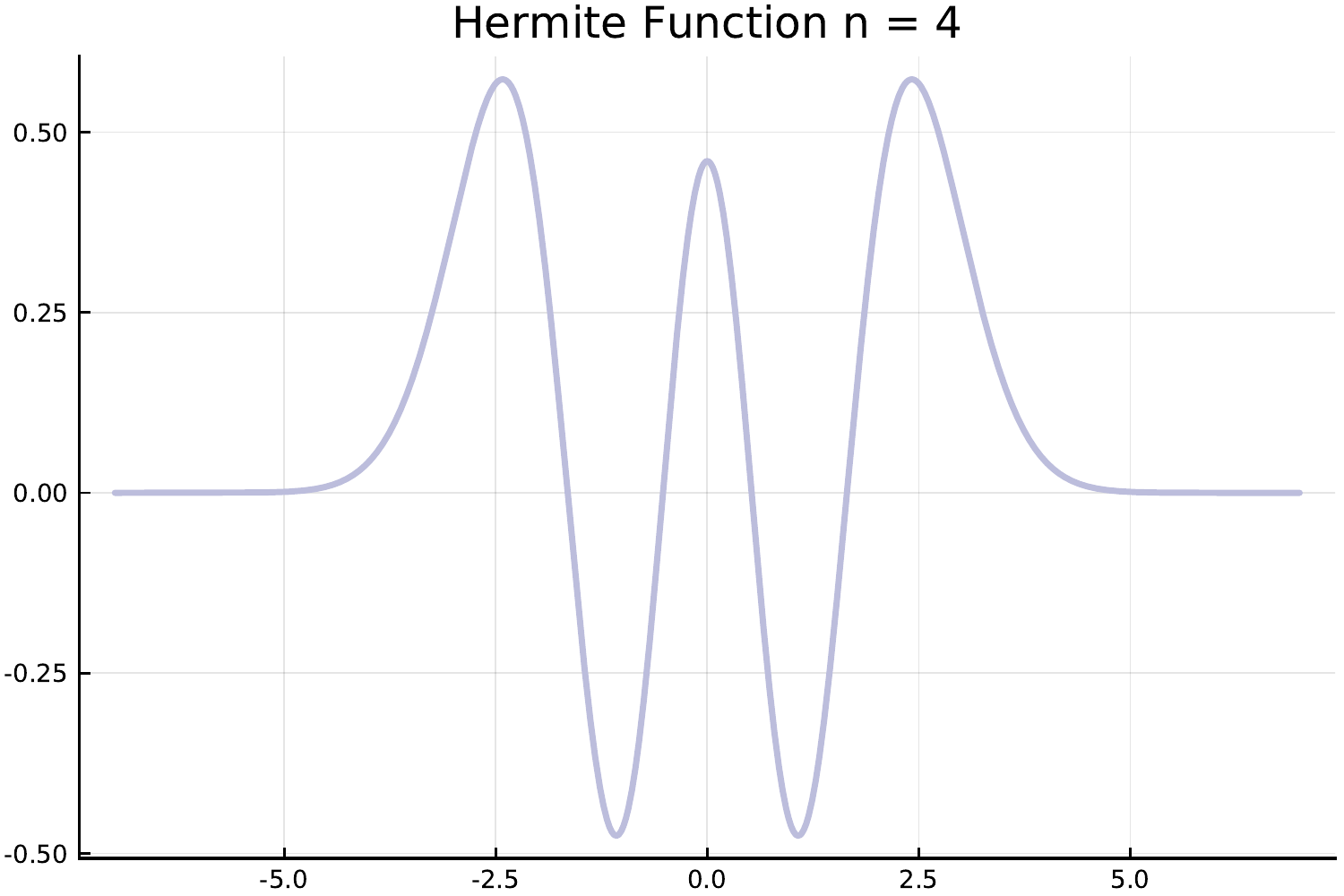}\,
  \includegraphics[width=.32\textwidth]{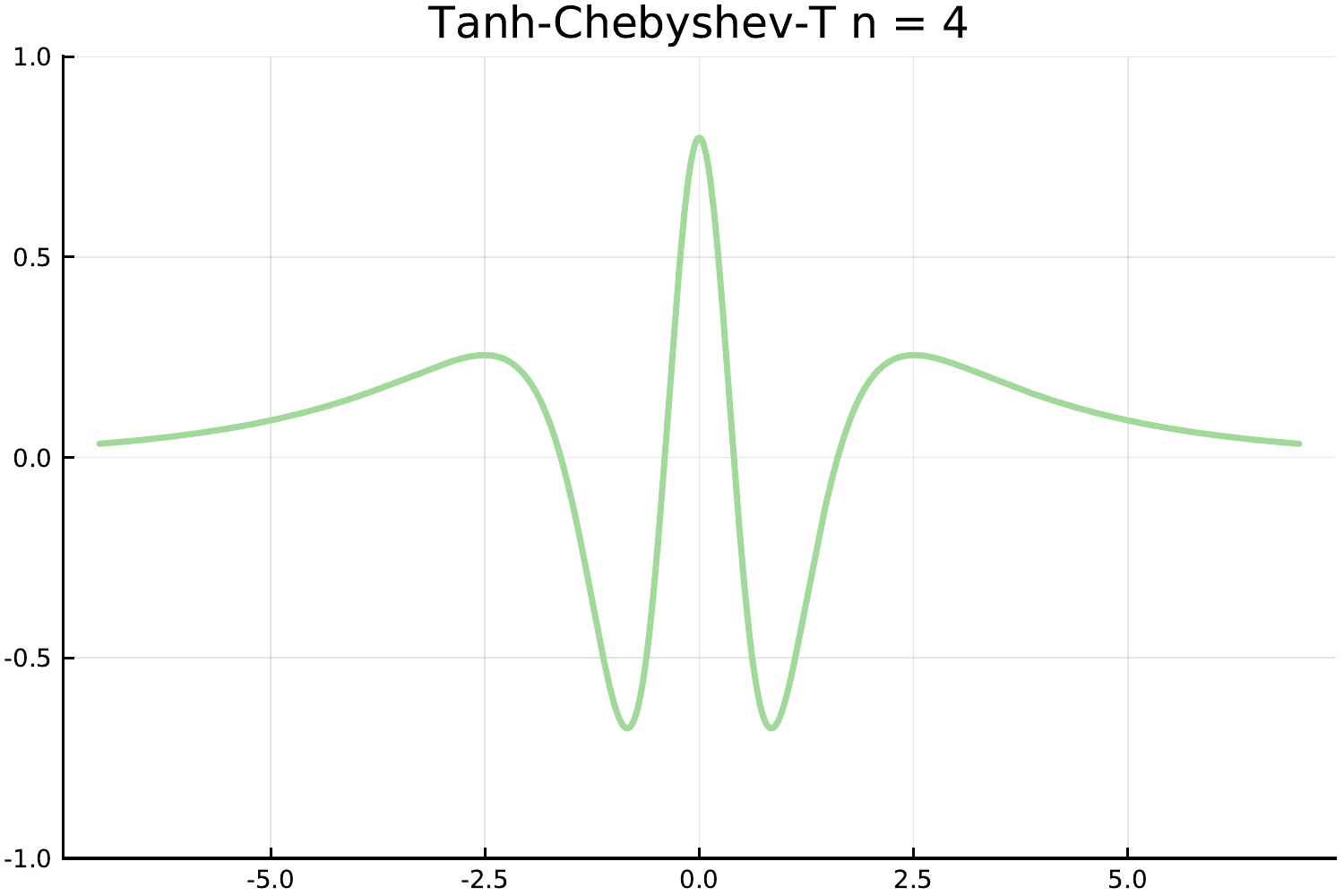}\,
  \includegraphics[width=.32\textwidth]{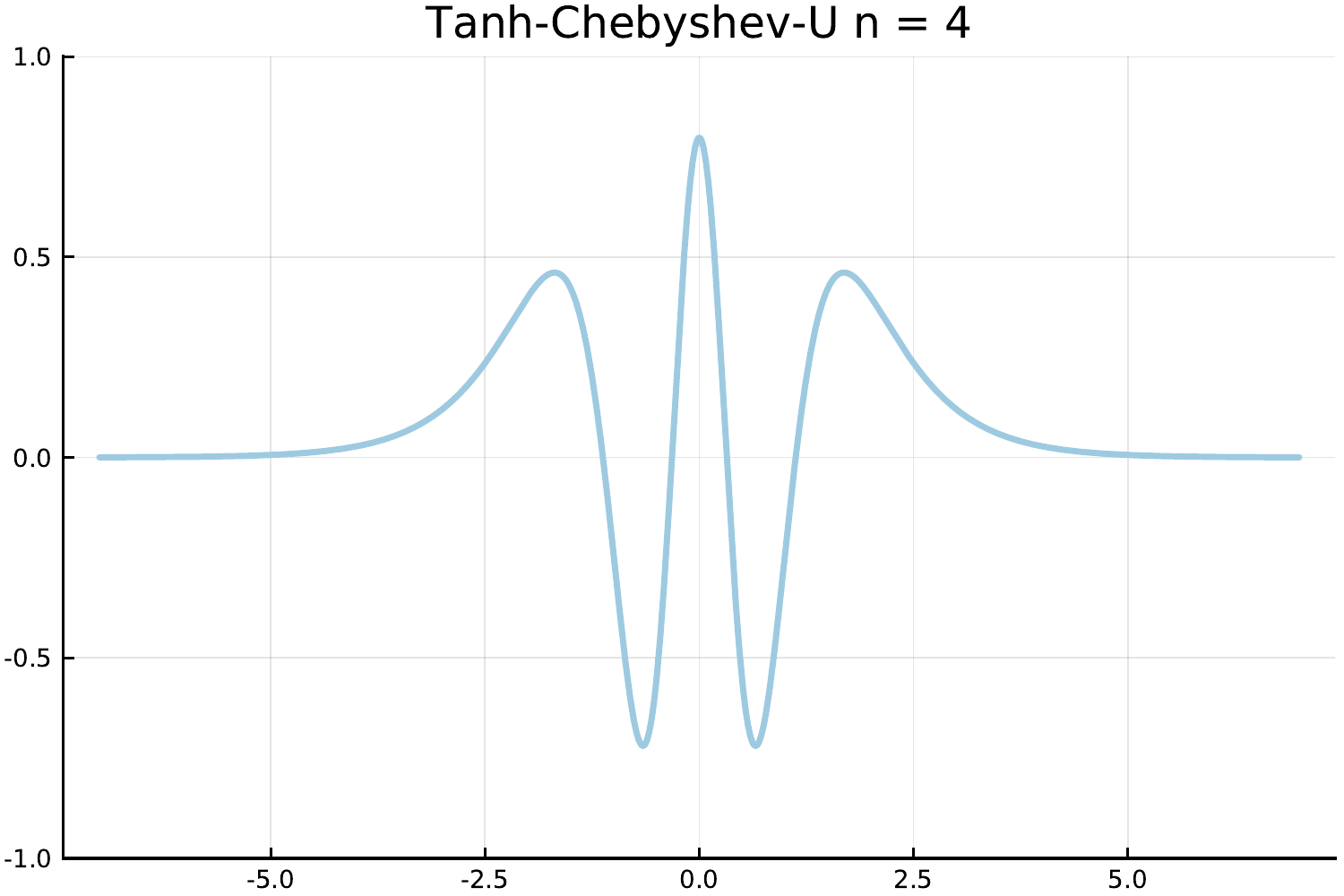}
  
  \caption{{The first five basis functions for the Hermite functions (left, purple), tanh-Chebyshev-T functions (center, green), and tanh-Chebyshev-U functions (right, blue). The top, darkest coloured line represents $m=0$, progressing to the bottom, lighter coloured line representing $m=4$.}}
\end{figure}

\subsection{Expansion coefficients}

Using the change of variables $t = \tanh x$, the expansion coefficients of a function $f \in \CC{L}_2(\BB{R})$ in the orthonormal basis $\Phi^{(\alpha,\beta)}$ can be expressed as

\begin{equation}
\label{eq:4.1}
\hat{f}_m = \frac{(-1)^m}{\sqrt{g_m^{(\alpha,\beta)}}} \int_{-1}^1 \frac{f(\arctanh t)}{(1-t)^{\frac{\alpha+1}{2}}(1+t)^{\frac{\beta+1}{2}}} \PP_m^{(\alpha,\beta)}(t) (1-t)^\alpha (1+t)^\beta \, \D t,
\end{equation}
which are the Jacobi coefficients of the modified function
\begin{displaymath}
F(t) = \frac{f(\arctanh t)}{(1-t)^{\frac{\alpha+1}{2}}(1+t)^{\frac{\beta+1}{2}}},
\end{displaymath}
with a diagonal scaling. The regularity of $F$ determines the convergence of the coefficients, since Jacobi polynomials bases have spectral convergence properties (this is an elementary consequence of integration by parts and derivative identity \cite[18.19.6]{DLMF}).

For the half-range model, while the coefficients are equal to those given above with $\beta = \alpha$, they take the following form. Let
\begin{displaymath}
f_\EE(x)=\frac12 [f(x)+f(-x)],\qquad f_\OO(t)=\frac12[f(x)-f(-x)],\qquad x\in(0,\infty),
\end{displaymath}
be the even and odd portions of $f$, respectively. Then, using the transformation $x = h(t) = \arctanh\sqrt{\frac{1+t}{2}}$, we have 

\begin{eqnarray}
  \nonumber
  \hat{f}_{2m}&=&\int_{-\infty}^\infty f(x)\varphi_{2m}(x)\D x=2\int_0^\infty f_\EE(x)\varphi_{2m}(x)\D x=2\!\int_{-1}^1 f_\EE(h(t))h'(t)\varphi_{2m}(h(t))\D t\\
  \nonumber
  &=&\frac{2^{-\frac14}}{\sqrt{g_m^{(\alpha,-\frac12)}}} \int_{-1}^1 f_\EE\!\left(\arctanh\sqrt{\frac{1+t}{2}}\right) (1-t)^{-\frac12(1-\alpha)} (1+t)^{-\frac12}\PP_m^{(\alpha,-\frac12)}(t)\D t\\
  \label{eq:4.2}
  &=&\frac{2^{-\frac14}}{\sqrt{g_m^{(\alpha,-\frac12)}}} \int_{-1}^1 \frac{f_\EE\!\left(\arctanh\sqrt{\frac{1+t}{2}} \right) }{(1-t)^{(1+\alpha)/2}} \PP_m^{(\alpha,-\frac12)}(t) w_\EE(t)\D t.
\end{eqnarray}

Likewise,
\begin{eqnarray}
  \nonumber
  \hat{f}_{2m+1}&=&\int_{-\infty}^\infty \varphi_{2m}(x)\D x=2\int_0^\infty f_\OO(x)\varphi_{2m+1}(x)\D x\\
  \nonumber
  &=&2\int_{-1}^1 f_\OO(h(t))h'(t)\varphi_{2m+1}(h(t))\D t\\
  \label{eq:4.3}
  &=&-\frac{2^{\frac14}}{\sqrt{g_m^{(\alpha,\frac12)}}} \int_{-1}^1 \frac{f_\OO\left(\arctanh\sqrt{\frac{1+t}{2}} \right)}{(1-t)^{(1+\alpha)/2}(1+t)^{\frac12}} \PP_m^{(\alpha,\frac12)}(t) w_\OO(t)\D t.
\end{eqnarray}
The convergence of the coefficients is determined by the functions
\begin{displaymath}
F_\EE(t) = \frac{f_\EE\!\left(\arctanh\sqrt{\frac{1+t}{2}}\right) }{(1-t)^{(1+\alpha)/2}}, \qquad  F_\OO(t) = \frac{f_\OO\left(\arctanh\sqrt{\frac{1+t}{2}} \right)}{(1-t)^{(1+\alpha)/2}(1+t)^{\frac12}}.
\end{displaymath}

Exactly how properties of the function $f$ itself determine the rate of convergence is a topic for another paper. It is clear from the outset that it is some combination of regularity and decay at infinity for the function $f$ which will determine the regularity of $F$ --- if for example $f$ only decays algebraically at infinity, then $F$ will be unbounded in $(-1,1)$!

With regards to computation, the first $N$ Jacobi coefficients of a given function can be approximated in $\O{N (\log N)^2}$ operations using fast polynomial transform techniques described in \cite{townsend18fpt}. An efficient and straightforward Julia implementation exists in the software package, \textsc{ApproxFun} \cite{approxfun}.

The cases where $\alpha, \beta = \pm \frac12$ correspond to Chebyshev polynomials of  various kinds. There are four kinds of Chebyshev polynomials:
\begin{enumerate}
  \item Chebyshev polynomials of the first kind $\CC{T}_m$, orthogonal in $(-1,1)$ with the weight function $(1-t^2)^{-\frac12}$;
  \item Chebyshev polynomials of the second kind $\CC{U}_m$, orthogonal in $(-1,1)$ with the weight function $(1-t^2)^{\frac12}$;
  \item Chebyshev polynomials of the third kind $\CC{V}_m$, orthogonal in $(-1,1)$ with the weight function $(1-t)^{\frac12}(1+t)^{-\frac12}$; and
  \item Chebyshev polynomials of he fourth kind $\CC{W}_m$, orthogonal in $(-1,1)$ with the weights function$(1-t)^{-\frac12}(1+t)^{\frac12}$
\end{enumerate}
\cite[18.3]{DLMF}. They are all scaled Jacobi polynomials with $\alpha,\beta\in\{-\frac12,+\frac12\}$ and are expressible in terms of trigonometric functions,
\begin{displaymath}
\begin{array}{lcl}
\displaystyle  \CC{T}_m(\cos\theta)=\cos m\theta, &\qquad& \displaystyle \CC{U}_m(\cos\theta)=\frac{\sin(m+1)\theta}{\sin\theta},\\[6pt]
\displaystyle \CC{V}_m(\cos\theta)=\frac{\sin(m+\frac12)\theta}{\sin\frac12\theta}, && \displaystyle \CC{W}_m(\cos\theta)=\frac{\cos(m+\frac12)\theta}{\cos\frac12\theta}
\end{array}
\end{displaymath}
\cite[18.5.1--4]{DLMF}.  Therefore, letting $t=\cos\theta$, the integral in equation \eqref{eq:4.1} can be converted into trigonometric integrals over $(0,\pi)$ which correspond to either Cosine or Sine Transform. Either can be discretised and computed in $\O{N\log_2N}$ operations using Fast Cosine Transform (FCT) or Fast Sine Transform (FST). 

Because of this algorithmic advantage, and odd-even symmetry, the bases $\Phi^{(\alpha,\beta)}$ with $(\alpha,\beta) = \left(-\frac12,-\frac12\right)$ and $(\alpha,\beta) = \left(\frac12,\frac12\right)$, {are our preferred choice of parameters}. The Tanh-Chebyshev-T functions are given by
\begin{eqnarray*}
  \varphi^{\left(-\frac12,-\frac12\right)}_m(x) &=& (-1)^m\sqrt{2/\pi} \, \mathrm{sech}^{\frac12} x \CC{\tilde{T}}_m(\tanh x), \qquad m\in\BB{Z}_+, \\ 
  b^{\left(-\frac12,-\frac12\right)}_m &=& \frac12\!\left(m+\frac12\right),
\end{eqnarray*}
where $\CC{\tilde{T}}_m = \CC{T}_m$ if $m>0$ and $\CC{\tilde{T}}_0 = 1/\sqrt{2}$. The Tanh-Chebyshev-U functions are given by
\begin{eqnarray*}
  \varphi^{\left(\frac12,\frac12\right)}_m(x) &=& (-1)^m \sqrt{\frac{2}{\pi}} \, \mathrm{sech}^{\frac32} x \CC{U}_m(\tanh x), \qquad m\in\BB{Z}_+, \\ 
  b^{\left(\frac12,\frac12\right)}_m &=& \frac12\!\left(m+\frac32\right).
\end{eqnarray*}

For the half-range model, equations \R{eq:4.2} and \R{eq:4.3} can also be converted into Sine and Cosine transforms. Specifically, with $\alpha=-\frac12$ we obtain the combination of Chebyshev polynomials of the first and the fourth kind, $\CC{T}_m$ and $\CC{W}_m$, which can be computed with FCT-I and FCT-II respectively -- cf.\ \cite{duhamel90fft} for different flavours of FCT and FST. Likewise, $\alpha=\frac12$ results in a combination of $\CC{U}_m$ and $\CC{V}_m$, computable with FST-I and FST-II respectively.

\subsection{The Fourier transform representation and completeness}

We will now prove that these bases are complete in $\CC{L}_2(\BB{R})$. By \cite[Thms 6, 8]{iserles18osssd} (see also equation \eqref{eq:1.5}) there exists a complex-valued function $g_{\alpha,\beta}(\xi)$ with even real part and odd imaginary part such that
\begin{equation}\label{eq:4.4}
\varphi^{(\alpha,\beta)}_m(x) = \frac{(-\ii)^m}{\sqrt{2\pi}} \int_{-\infty}^\infty g_{\alpha,\beta}(\xi) p_m(\xi) \ee^{\ii x \xi} \,\D\xi,
\end{equation}
where $P = \{p_m\}_{m\in\BB{Z}_+}$ are the orthonormal polynomials with respect to $\D\mu(\xi) = w(\xi) \, \D\xi = |g_{\alpha,\beta}(\xi)|^2 \, \D\xi$. By \cite[Thm.~9]{iserles18osssd}, the functions $\Phi^{(\alpha,\beta)}$ are complete in $\CC{L}_2(\BB{R})$ if $w(\xi) > 0$ for all $\xi \in \BB{R}$ and polynomials are dense in the space $\CC{L}_2(\BB{R}, \D\mu(\xi))$.

Now, applying the Fourier transform to both sides of equation \eqref{eq:4.4} and setting $m=0$, we have
\begin{displaymath}
g_{\alpha,\beta}(\xi) \propto \int_{-\infty}^\infty \varphi_0^{(\alpha,\beta)}(x) \ee^{-\ii x \xi} \,\D x.
\end{displaymath}
We can normalise this function after we find an expression. Let us use the change of variables $\tau = \ee^x$ to manipulate this integral into a more reasonable form.
\begin{eqnarray*}
  & & \int_{-\infty}^\infty \varphi_0^{(\alpha,\beta)}(x) \ee^{-\ii x \xi} \,\D x \\
  &=& \int_{-\infty}^\infty (1-\tanh x)^{\frac{\alpha+1}{2}}(1+\tanh x)^{\frac{\beta+1}{2}} \ee^{-\ii x \xi} \,\D x \\
  &=&  \int_{-\infty}^\infty \left(\frac{2\ee^{-x}}{\ee^x + \ee^{-x}}\right)^{\frac{\alpha+1}{2}}\left(\frac{2\ee^{x}}{\ee^x + \ee^{-x}}\right)^{\frac{\beta+1}{2}} \ee^{-\ii x \xi} \,\D x \\
  &=& 2^{\alpha + \beta + 1} \int_0^\infty \frac{\tau^{-\frac{\alpha+1}{2}} \tau^{\frac{\beta+1}{2}} \tau^{-\ii \xi}}{(\tau+\tau^{-1})^{\frac{\alpha+\beta}{2}+1}} \, \frac{\D \tau}{\tau} \\
  &=& 2^{\alpha + \beta+1} \int_0^\infty \frac{\tau^{\beta-\ii \xi}}{(1+\tau^2)^{\frac{\alpha+\beta}{2}+1}} \, \frac{\D \tau}{\tau}.
\end{eqnarray*}
The change of variables $\sigma = \tau^2$ transforms this into
\begin{displaymath}
\int_{-\infty}^\infty \varphi_0^{(\alpha,\beta)}(x) \ee^{-\ii x \xi} \,\D x = 2^{\alpha+\beta} \int_0^\infty \frac{\sigma^{\frac{\beta-1}{2}-\frac{\ii\xi}{2}}}{(1+\sigma)^{\frac{\alpha+\beta}{2}+1}} \, \D \sigma
\end{displaymath}
This is one of the standard integral formulae for Euler's Beta function \cite[5.12.3]{DLMF}, meaning that
\begin{displaymath}
g_{\alpha,\beta}(\xi) \propto \CC{B}\!\left(\frac{\alpha+1}{2} + \frac{\ii\xi}{2}, \frac{\beta+1}{2} - \frac{\ii\xi}{2}\right).
\end{displaymath}
Using the identity $\CC{B}(a,b) = \Gamma(a)\Gamma(b)/\Gamma(a+b)$ for all $a,b\in\BB{C}$, we deduce the existence of a real constant $C_{\alpha,\beta}$ such that
\begin{displaymath}\label{eqn:gab}
g_{\alpha,\beta}(\xi) = C_{\alpha,\beta} \Gamma\left(\frac{\alpha+1}{2} + \frac{\ii\xi}{2} \right)\Gamma\left(\frac{\beta+1}{2} - \frac{\ii\xi}{2} \right).
\end{displaymath}
Note that $g_{\alpha,\beta}$ is complex-valued in general, but has an even real part and odd imaginary part, which implies that $\varphi^{\alpha,\beta}_m(x)$ is real-valued for all $x \in \BB{R}$. Barnes's Beta Integral \cite[5.13.3]{DLMF} can be used to compute the constant $C_{\alpha,\beta}$ which makes the measure $ \D\mu_{\alpha,\beta}(\xi) = |g_{\alpha,\beta}(\xi)|^2 \D \xi$ have mass equal to one. 

We have thus proved the following theorem.
\begin{theorem}
  \label{measure}
  The Fourier-space measure associated with $\{\varphi_m^{(\alpha,\beta)}\}_{m=0}^\infty$ is
  \begin{displaymath}
  \D\mu_{\alpha,\beta}(\xi) = C_{\alpha,\beta}^2 \left|\Gamma\left(\frac{\alpha+1}{2} + \frac{\ii\xi}{2} \right)\Gamma\left(\frac{\beta+1}{2} - \frac{\ii\xi}{2} \right)\right|^2 \!\D\xi,\qquad \alpha,\beta>-1.
  \end{displaymath}
\end{theorem}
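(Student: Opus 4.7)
The plan is to use the Fourier representation from \cite[Thms 6, 8]{iserles18osssd}, already recorded in \R{eq:4.4}, which asserts the existence of a function $g_{\alpha,\beta}$ with even real part and odd imaginary part such that $\D\mu_{\alpha,\beta}(\xi)=|g_{\alpha,\beta}(\xi)|^2\D\xi$. Thus the theorem follows once $g_{\alpha,\beta}$ is identified. Setting $m=0$ in \R{eq:4.4} and noting that $p_0$ is a constant, $g_{\alpha,\beta}$ is, up to a $\xi$-independent scalar, the (inverse) Fourier transform of $\varphi_0^{(\alpha,\beta)}(x)\propto (1-\tanh x)^{(\alpha+1)/2}(1+\tanh x)^{(\beta+1)/2}$. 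This integrand lies in $\CC{L}_1(\BB{R})\cap\CC{L}_2(\BB{R})$ whenever $\alpha,\beta>-1$, so the transform is well defined and continuous.

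To evaluate the Fourier integral I would substitute $\tau=\ee^x$, using $1-\tanh x=2\ee^{-x}/(\ee^x+\ee^{-x})$ and $1+\tanh x=2\ee^x/(\ee^x+\ee^{-x})$ to recast the integral in the form $\int_0^\infty \tau^{\beta-\ii\xi}(1+\tau^2)^{-(\alpha+\beta)/2-1}\D\tau/\tau$. A further change of variables $\sigma=\tau^2$ then matches this to the standard Mellin form of Euler's Beta function \cite[5.12.3]{DLMF}, with parameters $a=(\beta+1)/2-\ii\xi/2$ and $b=(\alpha+1)/2+\ii\xi/2$ (so that $a+b=(\alpha+\beta)/2+1$ is independent of $\xi$). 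Invoking $\CC{B}(a,b)=\G(a)\G(b)/\G(a+b)$ and absorbing the $\xi$-independent denominator $\G((\alpha+\beta)/2+1)$ into the normalising scalar, one arrives at
$$g_{\alpha,\beta}(\xi)=C_{\alpha,\beta}\,\G\!\left(\frac{\alpha+1}{2}+\frac{\ii\xi}{2}\right)\G\!\left(\frac{\beta+1}{2}-\frac{\ii\xi}{2}\right)\!.$$

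Taking squared moduli then yields the measure in the statement. The parity conditions required in \cite{iserles18osssd} are automatic: conjugation symmetry $\overline{\G(z)}=\G(\bar z)$ gives $g_{\alpha,\beta}(-\xi)=\overline{g_{\alpha,\beta}(\xi)}$, which is equivalent to an even real part and an odd imaginary part. The constant $C_{\alpha,\beta}$ is fixed by demanding $\int_{-\infty}^\infty|g_{\alpha,\beta}(\xi)|^2\D\xi=1$ (equivalent to $\|\varphi_0^{(\alpha,\beta)}\|_{\CC{L}_2}=1$ by Plancherel), and this specific integral is precisely an instance of Barnes's Beta integral \cite[5.13.3]{DLMF}. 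The main obstacle is bookkeeping: carefully tracking the proportionality constants through the two substitutions and verifying convergence at $\tau\to 0^+$ and $\tau\to\infty$ (guaranteed by $\alpha,\beta>-1$, since the integrand decays like $\tau^{\beta}$ and $\tau^{-\alpha-2}$ respectively). Nothing here requires deep new input beyond the standard Gamma-function calculus.
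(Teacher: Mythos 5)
Your proposal is correct and follows essentially the same route as the paper: identify $g_{\alpha,\beta}$ as the Fourier transform of $\varphi_0^{(\alpha,\beta)}$ via the $m=0$ case of \R{eq:4.4}, evaluate it with the substitutions $\tau=\ee^x$ and $\sigma=\tau^2$ to reach Euler's Beta integral, and normalise with Barnes's Beta integral. Your added remarks on convergence for $\alpha,\beta>-1$ and the conjugate-symmetry check are consistent with (and slightly more explicit than) the paper's argument.
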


There is little hope of simplifying the expression for $\D\mu_{\alpha,\beta}$ from Theorem~\ref{measure} for general $\alpha,\beta>-1$ except in some special cases.

Let us first consider the  case $\beta = -\alpha$, where necessarily $\alpha \in (-1,1)$. In this case, the reflection formula, $\Gamma(z)\Gamma(1-z) = \pi \mathrm{cosec}\,\pi z$ \cite[5.5.3]{DLMF}, implies
\begin{displaymath}
g_{\alpha,-\alpha}(\xi) = \pi C_{\alpha,-\alpha}\frac{1}{\cos\left( \frac{\pi}{2} (\alpha + \ii\xi) \right)},
\end{displaymath}
and using the basic multiple angle formula, the associated measure is equal to
\begin{displaymath}
\D\mu_{\alpha,-\alpha}(\xi) = 2\pi^2 C_{\alpha,-\alpha}^2 \frac{1}{\cosh(\pi\xi) + \cos(\pi\alpha)} \,\D\xi.
\end{displaymath}
The polynomials associated with this measure are known as the \emph{Carlitz polynomials} (after mapping them from a line in the complex plane to the real line) \cite{carlitz59ben}. They were mentioned in \cite{iserles18osssd}, but now we have the full picture of their relationship to orthogonal systems with a tridiagonal, skew-symmetric differentiation matrix.

The case $\alpha = \beta$ yields
\begin{equation}\label{eq:4.5}
g_{\alpha,\alpha}(\xi) = C_{\alpha,\alpha} \left|\Gamma\left(\frac{\alpha+1}{2} + \frac{\ii\xi}{2} \right)\right|^2\!,
\end{equation}
since $\Gamma(\overline{z}) = \overline{\Gamma(z)}$.

\begin{lemma}
  \label{g_integer}
  In the case $\alpha=\beta\geq0$ being integers we have
  \begin{eqnarray}
    \label{eq:4.6}
    g_{2n,2n}(\xi)&=&\pi C_{n,n}\frac{\prod_{j=0}^{n-1}[(j+\frac12)^2+\frac14 \xi^2]}{\cosh (\frac{\pi\xi}{2})},\qquad \\
    \nonumber
    g_{2n+1,2n+1}(\xi)&=&\frac{\pi}{2} C_{2n+1,2n+1} \frac{\xi\prod_{j=1}^n (j^2+\frac14\xi^2)}{\sinh(\frac{\pi\xi}{2})},\qquad n\in\BB{Z}_+.
  \end{eqnarray}
\end{lemma}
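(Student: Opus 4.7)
The plan is to start from equation \R{eq:4.5}, which gives $g_{\alpha,\alpha}(\xi) = C_{\alpha,\alpha}|\Gamma(\frac{\alpha+1}{2}+\frac{\ii\xi}{2})|^2$, and separately handle the cases $\alpha=2n$ and $\alpha=2n+1$. In each case the recipe is the same: use the functional equation $\Gamma(z+1)=z\Gamma(z)$ to peel off $n$ factors from the Gamma function, reducing the argument to the ``base'' case $\frac12+\frac{\ii\xi}{2}$ or $1+\frac{\ii\xi}{2}$, and then apply the reflection formula $\Gamma(z)\Gamma(1-z)=\pi/\sin(\pi z)$ to evaluate $|\Gamma(\cdot)|^2$ in closed form in terms of $\cosh$ or $\sinh$.

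For $\alpha=2n$, iterating the recurrence yields
\begin{displaymath}
\Gamma\!\left(n+\tfrac12+\tfrac{\ii\xi}{2}\right)=\prod_{j=0}^{n-1}\!\left(j+\tfrac12+\tfrac{\ii\xi}{2}\right)\Gamma\!\left(\tfrac12+\tfrac{\ii\xi}{2}\right)\!,
\end{displaymath}
so that, pairing with its complex conjugate,
\begin{displaymath}
\left|\Gamma\!\left(n+\tfrac12+\tfrac{\ii\xi}{2}\right)\right|^2 =\prod_{j=0}^{n-1}\!\left[(j+\tfrac12)^2+\tfrac14\xi^2\right] \left|\Gamma\!\left(\tfrac12+\tfrac{\ii\xi}{2}\right)\right|^2\!.
\end{displaymath}
Applying the reflection formula with $z=\frac12+\frac{\ii\xi}{2}$ (so $1-z=\frac12-\frac{\ii\xi}{2}=\bar z$) gives $|\Gamma(\frac12+\frac{\ii\xi}{2})|^2=\pi/\sin(\frac\pi2+\frac{\ii\pi\xi}{2})=\pi/\cosh(\frac{\pi\xi}{2})$, which substituted back yields the first formula in \R{eq:4.6}.

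The case $\alpha=2n+1$ is analogous: the recurrence gives
\begin{displaymath}
\left|\Gamma\!\left(n+1+\tfrac{\ii\xi}{2}\right)\right|^2 =\prod_{j=1}^{n}\!\left(j^2+\tfrac14\xi^2\right) \left|\Gamma\!\left(1+\tfrac{\ii\xi}{2}\right)\right|^2\!,
\end{displaymath}
and using $\Gamma(1+z)=z\Gamma(z)$ together with the reflection formula one finds $|\Gamma(1+\frac{\ii\xi}{2})|^2=(\frac{\ii\xi}{2})(-\frac{\ii\xi}{2})\Gamma(\frac{\ii\xi}{2})\Gamma(-\frac{\ii\xi}{2})=\frac{\xi^2}{4}\cdot\frac{\pi}{(\ii\xi/2)\sin(-\ii\pi\xi/2)}=\frac{(\pi\xi/2)}{\sinh(\pi\xi/2)}$ (the half-integer shift in the sine converts to $\sinh$ via $\sin(\ii y)=\ii\sinh y$). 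Substitution into \R{eq:4.5} yields the second formula in \R{eq:4.6}.

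I expect no genuine obstacle here: the proof is a direct manipulation of Gamma-function identities, and the only point demanding a little care is keeping track of signs and factors of $\ii$ when converting trigonometric values at imaginary arguments to their hyperbolic counterparts. I would perform those conversions explicitly once, in the form $\sin(\frac\pi2+\ii y)=\cosh y$ and $\sin(\ii y)=\ii\sinh y$, to ensure no factor of $\pi$ or of $\frac12$ is lost.
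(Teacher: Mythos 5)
Your proof is correct and follows essentially the same route as the paper: the paper likewise reduces to the $n=0$ base cases via $\Gamma(z+1)=z\Gamma(z)$, phrased as an induction in steps of two through the ratio $g_{n,n}/C_{n,n}=\frac{(n-1)^2+\xi^2}{4}\,g_{n-2,n-2}/C_{n-2,n-2}$, with the base cases left to ``direct computation''. Your explicit evaluation of $|\Gamma(\tfrac12+\tfrac{\ii\xi}{2})|^2$ and $|\Gamma(1+\tfrac{\ii\xi}{2})|^2$ via the reflection formula simply supplies that omitted detail, and your bookkeeping of the signs and factors of $\ii$ checks out.
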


\begin{proof}
  In the case $n=0$ \R{eq:4.6} is confirmed by direct computation. Otherwise we use the standard recurrence formula $\Gamma(z+1)=z\Gamma(z)$. \R{eq:4.5} implies that
  \begin{displaymath}
  \frac{g_{n,n}(\xi)}{C_{n,n}}=\frac{(n-1)^2+\xi^2}{4} \frac{g_{n-2,n-2}(\xi)}{C_{n-2,n-2}},\qquad n\geq2,
  \end{displaymath}
  and \R{eq:4.6} follows by simple induction. 
\end{proof}

Regarding completeness of $\Phi^{(\alpha,\beta)}$ in $\CC{L}_2(\BB{R})$, as mentioned above,  $g_{\alpha,\beta}$ being nonzero for all $\xi \in \BB{R}$ (because the $\Gamma$ function has no roots in the complex plane), all that remains is the question of density of polynomials in the space $\CC{L}_2(\BB{R},\D\mu)$. It would be sufficient that $w(\xi)$ have exponential decay as $\xi \to \pm\infty$ \cite[pp.~45,86,86]{akhiezer1965classical}.

To show that these measures have exponential decay, we use the asymptotic formula \cite[5.11.9]{DLMF},
\begin{displaymath}
|\Gamma(x + \ii y)| \sim  \sqrt{2\pi} |y|^{x-\frac12}\ee^{-\pi|y|/2} \text{ as } |y| \to \infty.
\end{displaymath}
This implies,
\begin{displaymath}
\left|\Gamma\left(\frac{\alpha+1}{2} + \frac{\ii\xi}{2} \right)\Gamma\left(\frac{\beta+1}{2} - \frac{\ii\xi}{2} \right)\right|^2 \sim 4\pi^2 \left|\frac{\xi}{2}\right|^{\alpha+\beta} \ee^{-\pi\xi} \text{ as } |\xi| \to \infty,
\end{displaymath}
as required.

An intriguing fact is that \R{eq:4.5} can be alternatively derived from a little-known formula due to Ramanujan \cite{ramanujan15sdi}, namely
\begin{displaymath}
\int_{-\infty}^\infty |\Gamma(a+\ii \xi)|^2 \ee^{\ii x\xi}\D \xi=\frac{\sqrt{\pi}\,\Gamma(a)\Gamma(a+\frac12)}{\cosh^{2a}\left(\frac{x}{2}\right)},\qquad a>0.
\end{displaymath}
Taking $a=(\alpha+1)/2$, a trivial change of variable takes $g_{\alpha,\alpha}$ to the correct $\varphi_0$ and the proof follows by inverting the Fourier transform. Going in the reverse direction, we can obtain a  generalisation of Ramanujan's formula, to the Fourier transform of $\Gamma(a+\ii\xi)\Gamma(b-\ii\xi)$ for $a,b > 0$.

Except for Carlitz polynomials and their immediate generalisations, orthogonal polynomials associated with measures $\D\mu_{\alpha,\beta}$ do not appear to have been studied in the literature. They might be an interesting object for further study, being examples of measures supported by the entire real line, yet distinct from the more familiar Freud-type measures.

\setcounter{equation}{0}
\setcounter{figure}{0}
\section{Conclusions}

We set ourselves a goal in this paper: to identify and characterise orthonormal systems, complete in $\CC{L}_2(\BB{R})$ and with a skew-symmetric, tridiagonal, irreducible differentiation matrix whose expansion coefficients can be computed rapidly. In particular we are interested in the computation of the first $N$ expansion coefficients in $\O{N\log_2N}$ operations, utilising familiar transforms, e.g.\ FFT, FCT or FST. 

In Section~2 we introduced the Tanh-Jacobi functions, of which the four cases where $\alpha,\beta = \pm \frac12$ have expansion coefficients which can be computed using FCTs or FSTs. Subsequently, in Section~3 we described two kinds of half-range expansions (i.e., treating the even and the odd part of a function separately) which can be computed rapidly with either FCT or FST: the first based on a combination of Chebyshev polynomials of the first and the fourth kind, the second on such polynomials of the second and third kind. 

Mathematically, the two approaches are identical when $\alpha = \beta$, although their computation is somewhat different. Which is preferable? As things stand, there is no clear answer (and things are complicated by the availability of yet another approach of this kind, using skew-Hermitian differentiation matrices, which is described in  \cite{iserles19for}). The full-range approximation has the virtue of simplicity, hence of easier implementation. A possible advantage of a half-range approximation is more subtle. Once $\mathcal{D}$ approximates the first derivative, $\mathcal{D}^2$ approximates the second one and, provided $\mathcal{D}$ is skew symmetric, $\mathcal{D}^2$ is negative semi-definite -- this is in line with the Laplace operator being negative semi-definite and is vital for stability. 

As a square of a skew-symmetric, tridiagonal matrix, $\mathcal{D}^2$ neatly separates even and odd functions. Specifically, let $\GG{E}\oplus\GG{O}=\CC{L}_2(\BB{R})\cap\CC{C}^2(\BB{R})$ be a representation of square-integrable, twice differentiable functions on the line as a direct sum of even functions $\GG{E}$ and odd functions $\GG{O}$. A derivative takes $\GG{E}$ to $\GG{O}$ and vice versa, hence a second derivative is invariant in both $\GG{E}$  and  $\GG{O}$. There is thus a virtue, at least once both first and second derivatives are present, to work separately in $\GG{E}$ and $\GG{O}$, as is  the case with half-range approximations. 

Is simplicity preferable to even--odd separation? Are there additional considerations at play? By this stage it is impossible to provide a definitive answer. The purpose of the paper is to present a range of new results that improve our knowledge of approximation on the real line in the context of spectral methods.

The outlook for spectral methods on the real line using Tanh-Chebyshev-T functions appears promising. Consider the basic first order differential operator,
\begin{displaymath}
\mathcal{L}u(x) = u'(x) + a(x)u(x),
\end{displaymath}
where $a$ is a bounded function such that a rapidly convergent expansion of the form $a(x) = \sum_{m=0}^\infty a_m \CC{\tilde{T}}_m(\tanh x)$ is possible\footnote{With the absence of the $\mathrm{sech}^{\frac12}x$ weight, these functions need not be square-integrable on the real line, which is perfectly consistent with which functions $a(x)$ allow $\mathcal{L}$ to be a bounded operator on the Sobolev space $\CC{H}^1(\BB{R})$.}. In coefficient space for the Tanh-Chebyshev-T functions, the operator $\mathcal{L}$ becomes the infinite-dimensional matrix $\mathcal{D} + \mathcal{A}$, where $\mathcal{D}$ is the skew-symmetric tridiagonal differentiation matrix \eqref{eq:1.3} with $b_m = \frac12\left( m + \frac12 \right)$, and
\begin{equation*}
\mathcal{A} =
\left[
\begin{array}{ccccc}
a_0 & a_1 & a_2 & a_3 & \cdots\\
a_1 & a_0 & a_1 & a_2 & \ddots \\
a_2 & a_1 & a_0 & a_1 & \ddots \\
a_3 & a_2 & a_1 & a_0 & \ddots \\
\vdots & \ddots & \ddots & \ddots & \ddots
\end{array}
\right] + \left[
\begin{array}{ccccc}
a_1 & a_2 & a_3 & a_4 & \cdots\\
a_2 & a_3 & a_4 & a_5 & \iddots \\
a_3 & a_4 & a_5 & a_6 & \iddots \\
a_4 & a_5 & a_6 & a_7 & \iddots \\
\vdots & \iddots & \iddots & \iddots & \iddots
\end{array}
\right].
\end{equation*}

The matrix is Toeplitz-plus-Hankel and, if $a$ is sufficiently regular then the matrix is effectively banded, because if $a_m = 0$ for $m > M$ for some integer $M$ then the resulting operator has bandwidth $M$. This implies that an Olver--Townsend type approach for infinite-dimensional QR solution is possible in principle. An $r$th order differential operator with variable coefficients whose expansions have a maximum of $M$ terms yields a matrix with bandwidth of at most $r + M$. Furthermore, the resulting matrices respect certain symmetries that the operator $\mathcal{L}$ may have, something which is not true of the original Olver--Townsend ultraspherical spectral method. For example, if $\mathcal{L}$ is self-adjoint on $\CC{L}_2(\BB{R})$ then $L$ is a self-adjoint operator on $\ell_2$.

One final, credible, but as of yet unexplored, application of this work is for the computation of the Fourier transform of certain functions on the real line. Let $f \in \CC{L}_2(\BB{R})$ have a rapidly convergent expansion in one of the Tanh-Jacobi bases, $f(x) = \sum_{m=0} c_m \varphi_m^{(\alpha,\beta)}(x)$. An approximation $f_N(x) = \sum_{m=0}^N c^N_m \varphi_m^{(\alpha,\beta)}(x)$ computed using either fast polynomial transforms or the FCT/FST as discussed in Section 4. By the identity in equation \eqref{eq:1.5} and \cite{iserles18osssd}, the Fourier transform of $f_N$ is an expansion in the generalised Carlitz polynomials weighted by $g_{\alpha,\beta}$ (see equation \eqref{eqn:gab}),
\begin{equation}
\mathcal{F}[f](\xi) = g_{\alpha,\beta}(\xi) \sum_{m=0}^N (-\ii)^m c^N_m p_m(\xi).
\end{equation}
By Theorem 6 of \cite{iserles18osssd}, these generalised Carlitz polynomials, which are orthonormal with respect to $\D \mu_{\alpha,\beta}(\xi)$ in Theorem 3, satisfy
\begin{equation}
p_{m+1}(\xi) = \frac{\xi}{b_m} p_m(\xi) - \frac{b_{m-1}}{b_m} p_{m-1}(\xi),
\end{equation}
where $b_m$ is given in equation \eqref{eq:2.8}. Clenshaw's algorithm can be used to evaluate this expansion at a single point $\xi \in \BB{R}$, using purely the coefficients $\{b_m\}_{m\in\BB{Z}_+}$ \cite{clenshaw1955note}. This is similar to the work of Weber in which one computes a series expansion with orthogonal rational functions whose Fourier transforms are Laguerre functions, utilising the Fast Fourier Transform \cite{weber1980numerical}.

{A major issue that we have not pursued in this paper is the speed of convergence of different orthonormal bases in $\CC{L}_2(\BB{R})$, not least  Tanh-Jacobi bases. While approximation theory of analytic functions in a finite interval by means of an analytic orthonormal basis is well known, this is not the case on the real line. The issue (which persists under a map from $\BB{R}$ to a finite interval) is that this theory requires both the underlying function and the orthonormal basis to be analytic in a larger ellipse surrounding a finite interval: it is the size and shape of that ellipse that determines the exponential speed of convergence. This becomes problematic on the real line. Some orthonormal bases (e.g.\ Hermite functions) have an essential singularity at infinity (viewed as the North Pole of the Riemann sphere), as does, for example, the tanh map. Even when, like the Malmquist--Takenaka system \cite{iserles19for}, a basis is analytic in a strip surrounding $\BB{R}$, our problems are not over because most analytic functions of interest are likely to have an essential singularity at infinity. As a striking example, while the Malmquist--Takenaka expansion coefficients of $1/(1+x^2)$ (which is analytic ina  strip about $\BB{R}$ decay exponentially, as $3^{-|n|}$, the speed of decay for $\sin x/(1+x^2)$ is $\O{|n|^{-5/4}}$, barely better than linear \cite{weideman1995computing}! While important results have been published by Boyd \cite{boyd87smu}, much more needs be done to understand approximation theory on the real line.}

%%      ---------------------------------------------------------------------
%%      ------------------------- APPENDIX (OPTIONAL) -----------------------
%%      ---------------------------------------------------------------------
        
%%      If you have one appendix, uncomment the line \appendix and add
%%      a \section{ *** APPENDIX TITLE ***}. If you have more than
%%      one, uncomment the line \appendices and add a \section{ ***
%%      APPENDIX TITLE ***} command for each appendix title.

\appendix
%\appendices
\section{}

%%      Type body of appendix/-ices here.

%%      ---------------------------------------------------------------------
%%      ---------------------------ACKNOWLEDGMENTS (OPTIONAL) ---------------
%%      ---------------------------------------------------------------------

%% ***** UNCOMMENT THE FOLLOWING LINE TO ADD ACKNOWLEDGMENTS.

\ack 

%%      Type acknowledgments here.

The second author is grateful to FWO Research Foundation Flanders for a postdoctoral research fellowship at KU Leuven which he enjoyed during the research and writing of this paper.

%%      ---------------------------------------------------------------------
%%      --------------------------- BIBLIOGRAPHY ----------------------------
%%      ---------------------------------------------------------------------

\frenchspacing
\bibliographystyle{cpam}
\bibliography{CPAMstylePaper3}

\end{document}